\renewcommand{\baselinestretch}{1.20}
\newcommand{\checkxpos}[3][]{%
  \ifdim \zposx{#2}sp < 20000000sp%
    \mynote[#1]{#3}%
  \else%
    \note[#1]{#3}%
  \fi%
}
\newcommand{\mytodo}[2][]{%
  \zsaveposx{todo\the\todocount}%
  \checkxpos[#1]{todo\the\todocount}{#2}%
  \global\advance\todocount1\relax
}
\newcommand{\mynote}[2][]{{%
  \let\marginpar\marginnote
  \reversemarginpar
  \renewcommand{\baselinestretch}{0.8}%
  \todo[#1]{#2}}}
\newcommand{\note}[2][]{\renewcommand{\baselinestretch}{0.8}\todo[#1]{#2}}
\newcommand{\Real}{\mathbb{R}}
\newcommand{\pd}[2]{\partial_{#2}{#1}}
\newcommand{\gra}{\nabla}
\newcommand{\RR}{\mathbb{R}}
\newcommand{\ph}{\phi}
\newcommand{\Op}{\Omega_{p}}
\newcommand{\Om}{\Omega}
\newcommand{\wt}[1]{\widetilde{#1}}
\newcommand{\bbar}[1]{\overline{#1}}
\newcommand{\ii}[2]{\int_{#1}^{#2}}
\newcommand{\ssu}[1]{\underset{#1}{\sup}\,}
\newcommand{\inff}[1]{\underset{#1}{\inf}\,}
\newcommand{\xx}[1]{x_{#1}}
\newcommand{\mfc}{\mathfrak{c}}
\newcommand{\mfm}{\mathfrak{m}}
\newcommand{\mff}{\mathfrak{f}}
\newtheorem{theorem}{Theorem}[section]
\newtheorem{lemma}[theorem]{Lemma}
\newtheorem{proposition}[theorem]{Proposition}
\newtheorem{corollary}[theorem]{Corollary}
\newtheorem{assumption}{Assumption}[section]
\newtheorem{remark}{Remark}[section]
\numberwithin{equation}{section}
\begin{document}

\title[Steady solutions of hydrostatic Euler equations]{On the characterization, existence and uniqueness of steady solutions to the hydrostatic Euler equations in a nozzle}


\author{Wang Shing Leung}
\address{Wang Shing Leung, Department of Mathematics, Temple University, 1805 N. Broad Street, Philadelphia, PA 19122, USA}
\email{wang.shing.leung@temple.edu}

\author{Tak Kwong Wong}
\address{Tak Kwong Wong, Department of mathematics, The University of Hong Kong, Pokfulam, Hong Kong}
\email{takkwong@maths.hku.hk}

\author{Chunjing Xie}
\address{Chunjing Xie, School of mathematical Sciences, Institute of Natural Sciences, Ministry of Education Key Laboratory of Scientific and Engineering Computing, and CMA-Shanghai, Shanghai Jiao Tong University, 800 Dongchuan Road, Shanghai, China}
\email{cjxie@sjtu.edu.cn}


\dedicatory{}

\date{\today}

\begin{abstract}
	Incompressible Euler flows in narrow domains, in which the horizontal length scale is much larger than other scales, play an important role in applications, and their leading-order behavior can be described by the hydrostatic Euler equations. In this paper, we show that steady solutions of the hydrostatic Euler equations in an infinite  strip strictly away from stagnation must be shear flows. Furthermore,  we prove the existence, uniqueness, and asymptotic behavior of global steady solutions to the hydrostatic Euler equations in general nozzles. In terms of stream function formulation, the hydrostatic Euler equations can be written as a degenerate elliptic equation, for which the Liouville type theorem in a strip is a consequence of the analysis for the second order ordinary differential equation (ODE). The analysis on the associated ODE also helps determine the far field behavior of solutions in general nozzles, which plays an important role in guaranteeing the equivalence of stream function formulation.   One of the key ingredients for the analysis on flows in a general nozzle is a new transformation, which combines a change of variable  and an Euler-Lagrange transformation. With the aid of this new transformation, the solutions in the new coordinates enjoy explicit representations so that the regularity with respect to the horizontal variable can be gained in a clear way.
\end{abstract}

\keywords{hydrostatic Euler equations, two-dimensional, nozzle, strip, existence}
\subjclass[2010]{
	35B53, 35J70, 35Q35, 76B03}

\maketitle




\section{Introduction and main results}

A fundamental problem in fluid dynamics is to study the behaviors of flows in nozzles. For inviscid flows, there are lots of studies on the well-posedness of the flows in an infinitely long nozzle, for example,  the existence, uniqueness, and asymptotic behaviors of global solutions to the incompressible/compressible Euler equations in infinitely long nozzles were proved in \cite{Xie_Xin_2, Xie_Xin, Xie_Xin_3,Du_Xie_Xin,Chen_Huang_Wang_Xiang, LLX} and references therein. In these studies,  the Liouville type theorem for flows  in a strip, which asserts the uniqueness of solutions, not only helps establish the far field behavior of solutions  but also plays an important role in achieving the existence of solutions with nonzero vorticity in general nozzles.
Recently, a prominent Liouville type theorem for two-dimensional steady incompressible Euler equations  was established in \cite{Hamel_Nadirashvili_1}, where it was proved that any steady inviscid incompressible flow strictly away from stagnation, in a two-dimensional infinitely long strip must be a shear flow.
For more detailed description on the analysis for incompressible/compressible Euler system in a nozzle, we refer the reader to the literature review at the end of this section.

Near the solid boundary, the viscosity should play an important role for the behavior of flows. In 1950s, Leray proposed a problem on the well-posedness for the steady Navier-Stokes system in an infinitely long nozzle with asymptotic flat ends where the flows should tend to Poiseuille flows, the shear flows in straight nozzles. This problem is called Leary problem nowadays \cite{Galdi}, and was proved to be well-posed in \cite{Amick, LS} when the fluxes of flows are small. It is still a challenging open problem to study Leary problem with large flux \cite{Galdi}.  A key issue to solve Leary problem with large fluxes is to establish Liouville type theorem for Navier-Stokes system in a straight cylinder (cf. \cite{WangXie}). 
One of main difficulties is that two-dimensional 
steady Navier-Stokes system is a fourth order equation in terms of the stream function so that many analysis techniques cannot be used effectively.  Even for the simplified model, Prandtl system, the problem is still very difficult. In order to have a better understanding for the Prandtl system in an infinitely long nozzle, here we first study the solutions of its inviscid counterpart \cite{Weinan}, the hydrostatic Euler system, in a strip and in a  general nozzle. We hope that this study can shed some lights on the further study for Prandtl system in a nozzle and Leray problem on Navier-Stokes system in a nozzle.

The two-dimensional hydrostatic Euler equations\footnote{This set of partial differential equations have different names in various literature. For example, it is called the homogeneous hydrostatic equations in \cite{Grenier} and the inviscid Prandtl equations in \cite{Weinan}. In this paper, we adapt the terminology in \cite{Lions_1}.} can be used to describe the leading-order behavior of an ideal flow moving in a narrow domain \cite{Lions_1}, and read
\begin{equation*}\label{equation: time dependent hydrostatic Euler equations}
	\left\{\begin{aligned}
		\pd{v_1}{t} + v_1\pd{v_1}{x_1} + v_2\pd{v_1}{x_2} &= -\pd{p}{x_1},\\
		0 &= -\pd{p}{x_2},\\
		\pd{v_1}{x_1} + \pd{v_2}{x_2} &= 0,
	\end{aligned}\right.
\end{equation*}
where $t$ represents the time, $x := (x_1, x_2)$ represents the spatial coordinates, $v := (v_1, v_2) \in \Real^2$ is the flow velocity, and $p$ corresponds to the scalar pressure of the fluid. There are plenty of studies for hydrostatic Euler equatiosn in the last two decades. For the derivation of the hydrostatic Euler equations, one may refer to \cite{Brenier_1, Brenier_2, Grenier, Lions_1, Masmoudi_Wong} and references therein. The unsteady solutions of the hydrostatic Euler equations have been studied extensively. For example, the local well-possedness of solutions was established in \cite{Brenier_3, Kukavica_Masmoudi_Vicol_Wong, Kukavica_Temam_Vicol_Ziane, Masmoudi_Wong, Strain_Wong} in various settings. On the other hand, the ill-possedness and blow-up of classical solutions were investigated in \cite{Canulef, Cao_Ibrahim_Nakanishi_Titi, HanKwan_Nguyen, Ibrahim_Lin_Titi, Renardy_1, Wong}. 

In this paper, we study the two-dimensional steady hydrostatic Euler equations, which reads
\begin{equation}\label{equation : steady state hydrostatic Euler equation}
	\left\{\begin{aligned}
		v_1\pd{v_1}{x_1} + v_2\pd{v_1}{x_2} &= -\pd{p}{x_1}, \\
		0&=-\pd{p}{x_2},\\
		\pd{v_1}{x_1} + \pd{v_2}{x_2}&=0,
	\end{aligned}\right.
\end{equation}
in a nozzle domain $\Omega$ subject to the slip boundary condition
\begin{equation}\label{boundary condition for the flow in perturbed domain}
	(v_1, v_2)\cdot\nu = 0\quad \text{on}\,\, \partial \Omega,
\end{equation}
where $\nu$ is the unit outward normal vector to the boundary $\partial\Omega$.
The following two problems are considered:
\begin{enumerate}[(i)]
	\item Liouville type theorem for the steady hydrostatic Euler equations in a strip;
	\item the existence, uniqueness, and asymptotic behaviors of global solutions to steady hydrostatic Euler equations in general infinitely long nozzles.
\end{enumerate}

Our first result of this paper, namely the  Liouville type theorem for the steady hydrostatic Euler equations in a strip, can be stated as follows. 
\begin{theorem}[Liouville type theorem]\label{main result}
	Let $\Omega=\Omega_0:=\RR\times(0,1)$. Suppose that $v(x) := (v_1(x), v_2(x)) \in C^2(\overline{\Omega_0})$ satisfies \eqref{equation : steady state hydrostatic Euler equation} with boundary condition \eqref{boundary condition for the flow in perturbed domain}. If $\epsilon_0 := \underset{x \in \Omega_0}{\inf}|v(x)| > 0$, then $v$ is a steady shear flow, namely
	\begin{equation*}
		v(x) = (v_1(x_2), 0)\quad \text{in}\ \overline{\Omega_0}.
	\end{equation*}    
\end{theorem}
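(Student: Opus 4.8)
The plan is to pass to the stream function, extract the degenerate elliptic structure hidden in \eqref{equation : steady state hydrostatic Euler equation}, and reduce the whole problem to a boundary-value problem for a second-order autonomous ODE whose solution turns out to be rigid. Since $\nabla\cdot v=0$ on the simply connected strip $\Omega_0$, I would first introduce a stream function $\psi$ with $v_1=\partial_{x_2}\psi$ and $v_2=-\partial_{x_1}\psi$. The slip condition \eqref{boundary condition for the flow in perturbed domain} reads $v_2=\partial_{x_1}\psi=0$ on $x_2\in\{0,1\}$, so $\psi\equiv c_0$ on the bottom wall and $\psi\equiv c_1$ on the top wall. The hydrostatic balance $\partial_{x_2}p=0$ gives $p=p(x_1)$, and since $\nabla\psi=(-v_2,v_1)$ we have $|\nabla\psi|=|v|\ge\epsilon_0>0$, so $\psi$ has no critical point and the two walls are its extreme level curves.

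Next I would extract the transport identities. Differentiating the horizontal momentum equation in $x_2$ and using incompressibility, the cross terms cancel and the quantity $\omega:=\partial_{x_2}v_1=\partial_{x_2}^2\psi$ satisfies the transport relation $v_1\partial_{x_1}\omega+v_2\partial_{x_2}\omega=0$; a parallel computation shows the Bernoulli function $\tfrac12 v_1^2+p$ is likewise constant along streamlines. Because the streamlines are exactly the level sets of $\psi$ and (by the monotonicity discussed below) each value is attained on a single arc, $\omega$ is a function of $\psi$ alone, $\omega=f(\psi)$. The crucial point is that the $x_1$-derivatives have dropped out entirely, so $\psi$ solves the degenerate elliptic equation $\partial_{x_2}^2\psi=f(\psi)$, which for each fixed $x_1$ is the autonomous ODE $g''=f(g)$ for the slice $g(x_2):=\psi(x_1,x_2)$, subject to $g(0)=c_0$ and $g(1)=c_1$.

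The rigidity then comes from ODE analysis. Each slice obeys the energy conservation $\tfrac12(g')^2-F(g)=E(x_1)$ with $F'=f$, which is precisely the Bernoulli relation with $E(x_1)=-p(x_1)+\text{const}$. Assuming $g$ is a strictly monotone solution running from $c_0$ to $c_1$ over the fixed length $1$, the transit-time identity
\[
\int_{c_0}^{c_1}\frac{ds}{\sqrt{2\,(F(s)-E)}}=1
\]
must hold. Since the left-hand side is continuous and strictly decreasing in $E$, it pins $E$ to a single value, and this same equation (with the same $F$, $c_0$, $c_1$) holds for every $x_1$; hence $E$, and therefore $p$, is independent of $x_1$. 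With $g(0)=c_0$ and $g'(0)=\sqrt{2(F(c_0)-E)}$ now fixed, uniqueness for the initial value problem (here $f$ is $C^1$ because $v\in C^2$) forces the profile $g$ to be the same for all $x_1$. Thus $\psi=\psi(x_2)$, whence $v_2=-\partial_{x_1}\psi=0$ and $v_1=\partial_{x_2}\psi=v_1(x_2)$, which is the asserted shear flow.

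The main obstacle is exactly the monotonicity of $x_2\mapsto\psi(x_1,x_2)$, equivalently that $v_1$ keeps a fixed sign, which underpins both the well-definedness of $f$ and the transit-time step. On each wall $|v_1|\ge\epsilon_0$, so connectedness gives $v_1$ a constant sign there; the genuine danger is an interior turning point where $v_1=0$ and $|v_2|\ge\epsilon_0$, which would create recirculation and render $\omega=f(\psi)$ multivalued. Here the hypothesis $\epsilon_0>0$ is essential: the absence of stagnation points excludes closed streamlines, and, combined with the strip geometry, should force every streamline to be a graph crossing $\Omega_0$ from one end to the other, giving monotonicity and a common range $[c_0,c_1]$ at once; a turning point would, by the energy relation, cap $g$ strictly below $c_1$ and contradict $g(1)=c_1$. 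Making this global streamline/topology argument rigorous — and thereby justifying that a single function $f$ governs all of $\Omega_0$ — is the technical heart on which the clean ODE reduction rests.
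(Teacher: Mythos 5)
Your overall strategy coincides with the paper's: pass to the stream function, use the transported vorticity to write $\partial_{x_2}^2\psi=f(\psi)$, and conclude by uniqueness of the slice-wise two-point boundary value problem. However, there is a genuine gap exactly where you locate it, and the fix you sketch does not close it. To write $\omega=f(\psi)$ with a \emph{single} function $f$ valid on all of $\Omega_0$ you need to know that the level sets of $\psi$ are connected (each value of $\psi$ is attained on a single streamline) and that $c_0<\psi<c_1$ in the interior. The paper obtains this not from monotonicity of the slices but from the Hamel--Nadirashvili gradient-flow argument (its Lemmas 2.2 and 2.3): the trajectory of $\sigma'=\nabla\psi(\sigma)$ starting at $(0,0)$ reaches the line $x_2=1$ in finite time because $|\nabla\psi|\ge\epsilon_0$, $\psi$ is strictly increasing along it, every streamline must cross it, and $f$ is then defined by composing with the resulting diffeomorphism $\theta$. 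Your proposed substitute --- ruling out an interior turning point of $x_2\mapsto\psi(x_1,x_2)$ via the energy relation $\tfrac12(g')^2-F(g)=E$ --- is circular: that energy relation is a first integral of $g''=f(g)$, and the equation $g''=f(g)$ is precisely what cannot be asserted until the single-valuedness of $f$ (hence the streamline structure) is established. Moreover, the claim that a turning point ``caps $g$ strictly below $c_1$'' is not correct in general: at a point where $g'=0$ and $f(g)>0$ the slice has a local minimum and continues upward, so no contradiction with $g(1)=c_1$ arises from the energy relation alone.

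A second, smaller issue concerns the uniqueness step itself. Your transit-time identity $\int_{c_0}^{c_1}\bigl(2(F(s)-E)\bigr)^{-1/2}\,ds=1$ presupposes that $g$ is strictly monotone with $g'$ bounded away from zero on $[0,1]$ (otherwise the integral is improper and the change of variables breaks down), which is again a conclusion rather than a hypothesis: the only information available after the streamline analysis is the strict inequality $c_0<g<c_1$ on $(0,1)$. The paper avoids this by invoking the Berestycki--Nirenberg result (its Lemma 3.2, a special case of Theorem 1.5(iii) of that paper), which takes exactly the constraint $\eta_0<\eta<\eta_1$ as input and delivers both strict monotonicity and uniqueness as output. (Also, as a matter of bookkeeping, with your normalization the transit time is strictly \emph{increasing} in $E$, not decreasing, though monotonicity in either direction suffices to pin $E$.) If you import the two Hamel--Nadirashvili lemmas to define $f$ and establish $c_0<\psi<c_1$, and replace the transit-time step by the Berestycki--Nirenberg ODE lemma (or supply an independent argument that handles possible zeros of $g'$), your proof becomes complete and is then essentially the paper's.
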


We have the following remark on Theorem \ref{main result}.
\begin{remark}
When the condition $\underset{x \in \Omega_0}{\inf}|v(x)| > 0$   in Theorem \ref{main result} violates, there exists a 
 non-shear flow 
\[
v(x)=\nabla^\perp (e^{x_1}\sin(\pi x_2)) = (-\pi e^{x_1}\cos(\pi x_2), e^{x_1}\sin(\pi x_2) ),\quad p =-\frac{\pi^2}{2} e^{2x_1}
\]
for the steady hydrostatic Euler system  \eqref{equation : steady state hydrostatic Euler equation} in the strip $\Omega_0$ with slip boundary condition \eqref{boundary condition for the flow in perturbed domain}, which is also a solution of incompressible Euler system in $\Omega_0$ (\cite{Hamel_Nadirashvili_1}).
\end{remark}
If the domain is a nozzle with general geometry, i.e., $\Omega =\Omega_p$, where 
\begin{equation}\label{definition: definition of Omega_p}
	\Om_p:=\{(\xx{1}, \xx{2}): \xx{1}\in\RR, s_0(\xx{1}) < \xx{2} < s_1(\xx{1})\},
\end{equation}
and its boundaries $\pd{\Om_p}{}:= S_0\cup S_1$ with
\begin{equation}\label{definition: form of the nozzles wall}
	S_i = \{(x_1, x_2) : x_1\in\RR, \xx{2} = s_i(\xx{1})\},\quad i = 0,1.
\end{equation}
The boundary $S_i (i=0, 1):\RR\to\RR$,  is assumed to satisfy the following structural  assumptions.
\begin{assumption}\label{structural assumption on Omega p}
	The boundaries of the nozzle $\Omega_p$ are assumed to satisfy the following structural assumptions.
	\begin{enumerate}
		\item [(A1)] The functions $s_0$ and $s_1$ are bounded $C^2(\RR)$ functions, and  satisfy
		\[s_1(x_1) > s_0(x_1)\quad\text{for all}\,\, x_1\in\RR.\]
		
		\item [(A2)] The functions $s_0$ and $s_1$ satisfy the following upstream behaviors:
		\begin{equation*}
			\lim_{x_1\to-\infty}(s_1(x_1),s_0(x_1)) = (1,0).
		\end{equation*}
		\item [(A3)] At the downstream of the nozzle,  there exists constants $a\in \RR$ and $\sigma>0$ such that
		\begin{equation*}
			\lim_{x_1\to\infty}(s_1(x_1),s_0(x_1)) = (a,a+\sigma).
		\end{equation*}
	\end{enumerate}
\end{assumption}

There are a few remarks on the structural assumptions for the nozzles.

\begin{remark}\label{remark: remark on the boundedness of the height of nozzle}
	With the aid of Barbalat inequality (cf. \cite{Barbalat}), the functions $s_0$ and $s_1$ with finite $C^2(\mathbb{R})$ norm must satisfy
	\begin{equation*}
		s_0', s_1' \to 0 \,\, \text{as}\,\, x_1\to -\infty
	\end{equation*}
	as long as (A2) of Assumption \ref{structural assumption on Omega p} holds.
 Assumption~\ref{structural assumption on Omega p} implies that the nozzle $\Omega_p$ has finite width, i.e., there exist two constants $\underline{d}$, $\bar{d}\in (0, +\infty)$ such that
 \begin{equation}
 	0<\underline{d}=	\inf_{\RR}s(x_1)\leq \sup_{\RR}s(x_1) =\bar{d},
 \end{equation}
where 
\begin{equation}\label{defs}
	s(x_1) := s_1(x_1)-s_0(x_1).
\end{equation}
A special class of nozzles with finite width, which are flat except for a compact subset, satisfy
	\[
	s_0(x_1 )\equiv 0,\quad s_1(x_1) \equiv 1 \quad \text{for}\,\, x\leq -N,
	\]
	and
	\[
	s_0(x_1 )\equiv a,\quad s_1(x_1) \equiv a+\sigma \quad \text{for}\,\, x\geq N.
	\]
\end{remark}

\begin{remark}
It is just for simplicity to assume that the asymptotic heights are $1$ and $\sigma$ at the upstream and downstream, respectively. In fact, one can deal with the nozzles with arbitrary asymptotic heights at far fields.
\end{remark}

\begin{remark}
	In fact, one can study more general case, i.e., the asymptote  of the nozzle boundary are straightlines.  For example, one can remove the boundedness assumption in (A1) and replace (A3) in Assumption \ref{structural assumption on Omega p} by the following conditions:
	\begin{equation}\label{generalnozzle}
	\lim_{x_1\to+\infty} (s_0(x_1)-b_0 -b_1 x_1 )=0\quad \text{and} \quad 	\lim_{x_1\to+\infty} \left(s_1(x_1)-b_0-\sqrt{b_1^2 +1} -b_1 x_1 \right)=0,
	\end{equation}
	\begin{equation}\label{generalnozzlederivatives}
	    \lim_{x_1\to+\infty}s_1'(x_1) = \lim_{x_1\to+\infty}s_0'(x_1) = b_1,
	\end{equation}
	where $b_0$ and $b_1$ are two constants.
\end{remark}

Since there is a hyperbolic mode in steady hydrostatic Euler equations,
in order to completely determine the flows in a general nozzle, we ask the flows to satisfy the asymptotic behavior
\begin{equation}\label{upstreamcond}
	v_1\to v_1^-\quad  \text{uniformly on any compact subset of } (0,1), \quad \text{as}\,\, x_1\to -\infty. 
\end{equation}

The second result of this paper, which concerns the existence, uniqueness, and asymptotic behavior of solutions to the steady hydrostatic Euler equations in a general nozzle $\Omega_p$, can be stated as follows.

\begin{theorem}\label{theorem: existence of solution without sign condition}
	Let $\Omega=\Omega_p$ with $\Omega_p$ defined in \eqref{definition: definition of Omega_p} and satisfy  Assumption~\ref{structural assumption on Omega p}. Suppose that $v_1^-:=v^-_1(x_2) \in C^2([0,1])$ satisfies
	\begin{equation}\label{incomingcond}
		(v^-_1) > 0\ \text{in}\ [0,1]\quad \text{and}\quad 
		(v^-_1)'(0) \leq 0 \leq (v^-_1)'(1).
	\end{equation} 
	Then there exists a classical solution $(v,p)$ to the steady hydrostatic Euler equations \eqref{equation : steady state hydrostatic Euler equation} subject to the boundary condition \eqref{boundary condition for the flow in perturbed domain}, and asymptotic condition \eqref{upstreamcond}, which satisfies
	\begin{equation}\label{e:v1>0inbarOmegap}
		v_1 > 0\quad\text{in}\ \overline{\Omega_p},
	\end{equation}
	and the far field behaviors:
	\begin{equation}\label{upstreambehavior}
		(v_1, v_2)\to (v_1^-, 0)	\quad \text{uniformly in any compact set of } (0,1) \text{ as }x_1\to -\infty
	\end{equation}
and
	\begin{equation}\label{downstreambehavior}
	(v_1, v_2)\to (v_1^+, 0)	\quad \text{uniformly as in any compact set of } (a,a+\sigma) \text{ as  }x_1\to +\infty
\end{equation}
where $v_1^+$ is uniquely determined by $v_1^-$ and $\sigma$ appeared in (A3) of Assumption \ref{structural assumption on Omega p}. More precisely, we know that $v_1^+:=\frac{(\phi^\infty)'}{\sigma}$, where $\phi^\infty$ satisfies \eqref{equation: ODE of phi in downstream}. Furthermore, the solution is also unique in the class of functions satisfying \eqref{e:v1>0inbarOmegap} and \eqref{upstreambehavior}.
\end{theorem}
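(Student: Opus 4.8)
The plan is to recast the system in terms of a stream function, extract a Bernoulli-type invariant that reduces the momentum balance to a pointwise algebraic relation, and then apply a vertical partial hodograph (Euler--Lagrange) transformation to obtain explicit formulas from which both existence and uniqueness follow. First I would introduce the stream function $\phi$ with $(v_1,v_2)=(\partial_{x_2}\phi,-\partial_{x_1}\phi)$, normalized so that $\phi=0$ on $S_0$ and $\phi=\kappa:=\int_0^1 v_1^-\,dx_2$ on $S_1$; the flux $\kappa$ is constant along the nozzle by \eqref{boundary condition for the flow in perturbed domain}. Since $\partial_{x_2}p=0$, a direct computation gives $v\cdot\nabla(\tfrac12 v_1^2+p)=0$, so $\tfrac12 v_1^2+p=B(\phi)$ for a function $B$ transported along streamlines. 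Because \eqref{upstreamcond} forces every streamline to reach the upstream end, $B$ is pinned down: normalizing $p\to 0$ as $x_1\to-\infty$ and setting $\phi^-(x_2)=\int_0^{x_2}v_1^-$, which is strictly increasing by \eqref{incomingcond}, one obtains $B(\phi^-(x_2))=\tfrac12(v_1^-(x_2))^2$, whence $B\in C^2([0,\kappa])$ and $B'(0)=(v_1^-)'(0)\le 0\le (v_1^-)'(1)=B'(\kappa)$. Conversely, the Bernoulli relation together with incompressibility forces the momentum equation, so \eqref{equation : steady state hydrostatic Euler equation} is equivalent to finding $\phi$ with $v_1=\partial_{x_2}\phi=\sqrt{2(B(\phi)-p(x_1))}>0$.

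Next I would flatten the nozzle and apply the hodograph change of variables. For fixed $x_1$ the map $x_2\mapsto\phi(x_1,x_2)$ is strictly increasing (as $\partial_{x_2}\phi=v_1>0$), hence invertible; writing $x_2=y(\xi,\eta)$ with $\xi=x_1$, $\eta=\phi$, the identity $\partial_\eta y=1/v_1$ integrates to
\[
y(\xi,\eta)=s_0(\xi)+\int_0^\eta\frac{d\tau}{\sqrt{2(B(\tau)-p(\xi))}}.
\]
Imposing $y(\xi,\kappa)=s_1(\xi)$ converts the problem into a single scalar \emph{height equation} for the pressure,
\[
s(\xi)=F(p(\xi)),\qquad F(p):=\int_0^\kappa\frac{d\tau}{\sqrt{2(B(\tau)-p)}},
\]
with $s=s_1-s_0$ as in \eqref{defs}.

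I would then solve this equation. With $B_*:=\min_{[0,\kappa]}B>0$, the function $F$ is smooth and strictly increasing on $(-\infty,B_*)$, with $F(p)\to 0$ as $p\to-\infty$ and $F(0)=1$ (by the substitution $\tau=\phi^-(x_2)$); the sign conditions $B'(0)\le 0\le B'(\kappa)$ force the minimum of $B$ into the interior, which is precisely what makes the integral diverge as $p\uparrow B_*$, so $F:(-\infty,B_*)\to(0,\infty)$ is a bijection. Hence $p(\xi)=F^{-1}(s(\xi))$ is uniquely determined and inherits $C^2$ regularity in $x_1$ from $s\in C^2(\RR)$; since $s\le\bar d<\infty$, we get $p(\xi)\le F^{-1}(\bar d)<B_*$ uniformly, so $B(\eta)-p(\xi)$ is bounded below and $y$ is a genuine $C^2$ diffeomorphism up to the walls with no degeneracy. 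This is exactly how the regularity in the horizontal variable is gained: the $\eta$-dependence is manifestly smooth, while the $x_1$-dependence is transparent through $p=F^{-1}(s)$. Inverting to recover $\phi(x_1,x_2)$ and setting $v_1=\sqrt{2(B(\phi)-p)}$, $v_2=-\partial_{x_1}\phi$ yields a classical solution: the momentum equation holds by the equivalence above, the walls $S_i$ are streamlines so \eqref{boundary condition for the flow in perturbed domain} holds, and $v_1>0$ on $\overline{\Omega_p}$ since $p<B_*\le B(\eta)$, giving \eqref{e:v1>0inbarOmegap}.

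For the far field I would use the decay $s_0',s_1'\to 0$ from Remark~\ref{remark: remark on the boundedness of the height of nozzle} together with $s(\xi)\to 1$ upstream and $s(\xi)\to\sigma$ downstream. Differentiating the representation gives $v_2=v_1\,\partial_\xi y$ with $\partial_\xi y\to 0$, since $p'(\xi)=(F^{-1})'(s(\xi))\,s'(\xi)\to 0$; combined with $p(\xi)\to F^{-1}(1)=0$ upstream and $p(\xi)\to p^+:=F^{-1}(\sigma)$ downstream, this yields \eqref{upstreambehavior} and \eqref{downstreambehavior}, the downstream profile $v_1^+$ being the shear flow determined by $B$ and $p^+$ through \eqref{equation: ODE of phi in downstream}. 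Uniqueness is then immediate: any solution in the class \eqref{e:v1>0inbarOmegap}--\eqref{upstreambehavior} determines the same $B$ via \eqref{upstreamcond}, and the strict monotonicity of $F$ forces the same $p(\xi)$, hence the same $(v_1,v_2,p)$. I expect the main obstacle to be rigorously justifying the equivalence of the stream-function formulation with the original system together with the attainment of the upstream limit: this is exactly where the hyperbolic mode enters, since $B$ is defined through the far field, and it requires the careful analysis of $F$ near $B_*$ (controlled by the sign conditions in \eqref{incomingcond}) to guarantee that the height equation is globally and uniquely solvable and that the hodograph map remains nondegenerate up to the walls.
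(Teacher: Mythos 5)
Your proposal is correct in its essentials but reaches the conclusion by a genuinely different mechanism, so a comparison is worthwhile. The paper proves existence by freezing the horizontal variable, solving the two-point problem \eqref{transformed boundary value problem} via the Schauder fixed point theorem, establishing $0\le\phi\le\mfc$ and $\pd{\phi}{y_2}>0$ by maximum-principle and Hopf-lemma arguments (this is where \eqref{incomingcond}, in the guise of $f(0)\le 0\le f(\mfc)$, enters), and only then introduces the Euler--Lagrange transformation: the explicit formula \eqref{explicit formula of phi inverse} is used solely to gain continuity and decay in $x_1$, with the constant $\beta(z_1)$ recovered from the already-known solution and shown to depend smoothly on $s^2(z_1)$ by a \emph{local} implicit function theorem applied to $\mathcal{G}(s^2,\beta)=1$. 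You instead take the hodograph formula as the \emph{definition} of the solution: $B=\tfrac12(v_1^-\circ\kappa)^2$ is read off from the data, and everything collapses to the scalar height equation $s(\xi)=F(p(\xi))$, which is exactly the paper's relation $\mathcal{G}(s^2(z_1),\beta(z_1))=1$ with $\beta$ traded for the pressure. The new ingredient you need is the \emph{global} solvability of this scalar equation, i.e.\ that $F$ maps $(-\infty,B_*)$ onto all of $(0,\infty)$, where $B_*=\min B$; this is where \eqref{incomingcond} enters for you, replacing the maximum principle, and it buys you existence, positivity of $v_1$, horizontal regularity and the far-field limits directly from the formula, bypassing both Schauder and Hopf. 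Two caveats. First, a small imprecision: the sign conditions do not force the minimizer of $B$ into the interior (take $v_1^-$ constant, or $(v_1^-)'(0)=0$ with the minimum at the endpoint); what they do guarantee is that $B'$ vanishes at any minimizer $\tau_*$ (interior criticality, or minimality combined with $B'(0)\le 0\le B'(\mfc)$ at an endpoint), and since $B\in C^2$ this gives $B(\tau)-B_*=O((\tau-\tau_*)^2)$ and hence the logarithmic divergence of $F(p)$ as $p\uparrow B_*$ that your bijectivity claim actually rests on. Second, your route does not bypass the equivalence step: for uniqueness you must show that every solution in the class \eqref{e:v1>0inbarOmegap}, \eqref{upstreambehavior} satisfies the Bernoulli relation with this particular $B$, which requires the streamline analysis of Lemma~\ref{lemma: equivalence lemma} (each streamline escapes to $x_1=-\infty$ at height $\kappa(\varphi)$, pinning down the transported invariant via \eqref{upstreamcond}); you flag this correctly as the main obstacle, and it is needed in both approaches. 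You should also record that $y(\xi,\cdot)$ converges to its limiting profile so that \eqref{upstreambehavior}--\eqref{downstreambehavior} hold in the physical variable $x_2$, which again follows from the explicit formula.
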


We have the following remarks on Theorem \ref{theorem: existence of solution without sign condition}.

\begin{remark}
The conditions \eqref{incomingcond} are also needed in the previous studies for steady flows in nozzles, and they play an important role in studying solutions of compressible Euler equations, see \cite{Chen_Huang_Wang_Xiang, Xie_Xin} and references therein.
\end{remark}
\begin{remark}\label{remarkdownstream}
	In addition, if $\Om_p$ satisfies \eqref{generalnozzle}-\eqref{generalnozzlederivatives} instead of (A3) of Assumption \ref{structural assumption on Omega p}, then the flow $v$ also has the following far field behavior:
	\begin{equation}\label{equality: behavior of the flow at downstream for compact case}
		\lim_{x_1\to +\infty} (v_1, v_2) = \left({v_1^+}, {b_1 v_1^+}\right),
	\end{equation}
	where $v_1^+:[0,1]\to(0,\infty)$ is a $C^1$ function determined by $v_1^{-}$, $s_0$, and $s_1$. More precisely, we know that $v_1^+:= \frac{(\phi^{\infty})'}{\sqrt{b_1^2+1}}$ where $\phi^\infty$ satisfies \eqref{equation: ODE of phi in downstream} with $\sigma^2$ replaced by $b_1^2+1
	$.
\end{remark}

Before presenting the key ideas for the proof of main results in this paper, 
in the following, we give a brief summary on known results for inviscid flows in nozzles.

 Because of its rich applications in physics and engineering \cite{Bers_1, CourantF, Dafermos_1, LandauL}, the flows in nozzles provide many interesting problems in both fluid mechanics and mathematical analysis.
The problem on the existence of steady subsonic irrotational solutions for compressible Euler system in an infinitely long nozzle was posed in \cite{Bers_1}. The problem was solved in \cite{Xie_Xin_2,Xie_Xin_3} for irrotational subsonic and subsonic-sonic weak solutions in two-dimensional and three-dimensional axisymmetric nozzles. The existence of multidimensional irrotational subsonic and weak subsonic-sonic solutions were established in \cite{Du_Xin_Yan, Huang_Wang_Wang}. With the help of the detailed analysis for the behavior of flows near sonic points, regular irrotational subsonic-sonic flows were obtained in \cite{WangXin}.

For compressible Euler flows with nonzero vorticity, the existence of unique global subsonic isentropic flows was proved in \cite{Xie_Xin} when the variation of Bernoulli's function in the upstream is suitably small and  the mass flux of the flows is less than a critical value. In \cite{Du_Xie_Xin}, a class of subsonic flows with large vorticity were obtained where the horizontal velocity in the upstream is convex.  Later the subsonic solutions in nozzles were established in \cite{Chen_Huang_Wang_Xiang} when the incoming velocity satisfies \eqref{incomingcond}. There are studies for subsonic flows in periodic nozzles, axisymmetric nozzles, etc, one may refer to \cite{Chen_1, Chen_Du_Xie_Xin, Chen_Xie, Chen_2, Du_Duan, Duan_Luo_2, Duan_Luo_1} and references therein. 

In fact, in order to obtain the existence and asymptotic behavior of solutions for flows in infinitely long nozzles, a Liouville type theorem for solutions in infinitely long strips plays a crucial role.
The Liouville type theorem or characterizations of steady flows, which asserts the rigidity or uniqueness of solutions in a special class of domains, such as infinite strips, annulus, or the whole space, etc.  As it was mentioned at the beginning of this paper, in \cite{Hamel_Nadirashvili_1}, it was proved that any $C^2$ solution strictly away from stagnation, to the steady incompressible Euler equations must be a shear flow in the infinite strip. For the study on Liouville type theorem for steady incompressible Euler system in the whole plane and annulus domain, one may refer to \cite{Hamel_Nadirashvili_3,Hamel_Nadirashvili_2}. 
Recently, the Liouville type theorem for incompressible Euler equations with stagnation points was established in \cite{LLX} where the flows in general nozzles were also studied with the aid of this Liouville type theorem. The Liouville type theorems for incompressible Euler equations and 3D primitive equations were also investigated in \cite{Peralta-Salas_Slobodeanu, Ruiz}.

Here we provide the key ideas for the proof of main results. First, we use the stream function formulation to reduce the hydrostatic Euler equations into a single second order degenerate equation. The Liouville type theorem for the hydrostatic Euler equations is equivalent to the uniqueness of boundary value problem for an ordinary differntial equation (ODE).
For solutions in a general nozzle, when the incoming horizontal velocity satisfies \eqref{incomingcond}, one can use the comparison principle to obtain the bounds and existence of solutions for the degenerate partial differential equation (PDE). However, the regularity of these solutions in the $x_1$-direction is not clear. In order to show that these solutions are indeed classical ones, we exploit the structure of the equation and introduce a new transformation so that we can solve the associated equation with an explicit representation formula. This helps to gain the regularity clearly and shows that solutions are classical ones.

The rest of this paper is organized as follows. In Section~\ref{Preliminary}, we  introduce the stream function formulation to reduce the steady hydrostatic Euler equations into a single second order equation.  In Section~\ref{section:Liouville-type theorem for the hydrostatic Euler equations},  the method developed in \cite{Hamel_Nadirashvili_1} and the analysis for ODE developed in \cite{Berestycki_Nirenberg_1} are adapted  to prove the Liouville type theorem (i.e., Theorem~\ref{main result}) for the steady hydrostatic Euler equations. In Section~\ref{section: Existence of steady state hydrostatic flows in infinite nozzles}, the existence and uniqueness of solutions to the boundary value problem for the stream function are proved, where the key issue is to analyze a degenerate elliptic equation~with Dirichlet boundary condition. This, together with stream function formulation, shows Theorem \ref{theorem: existence of solution without sign condition}.

\section{Stream function formulation}\label{Preliminary}
The aim of this section is to introduce the stream function formulation  so that  the hydrostatic Euler equations can be reduced into a degenerate second order semilinear equation. 

Let $v:=(v_1, v_2)$ be a sufficiently smooth solution to \eqref{equation : steady state hydrostatic Euler equation}. It follows from the incompressibility condition and the simply connectedness of $\Omega_0$ and $\Omega_p$ that there exists a stream function $\varphi$ such that
\begin{equation}\label{negative gradient per of stream function}
	\pd{\varphi}{\xx{1}} = -v_2\quad \text{and}\quad \pd{\varphi}{\xx{2}} = v_1.
\end{equation} 
Furthermore, direct computations show that the solutions of steady hydrostatic Euler system \eqref{equation : steady state hydrostatic Euler equation} satisfy
\begin{equation}\label{equation : conservation of vorticity}
	\gra^\perp\varphi\cdot\gra\omega = v\cdot\gra\omega = 0,
\end{equation}
where
\begin{equation}\label{definition: vorticity of the hydrostatic Euler equations}
	\omega:=\pd{v_1}{x_2}=\partial_{x_2x_2}\varphi
\end{equation}
is the vorticity for the hydrostatic Euler equations.

If the flow does not contain any stagnation point, at which the flow speed is zero, then with the aid of implicit function theorem, equation \eqref{equation : conservation of vorticity} implies that in the neighborhood of each point $(x_1, x_2)$, $\omega=\omega(\varphi)$. Hence the stream function $\varphi$ satisfies the following  partial differential equation:
\begin{equation}\label{equation: degenerated PDE for HEE}
	\pd{\varphi}{x_2 x_2} = f(\varphi)
\end{equation}
in the neighborhood of each point $x$.
The key issue is whether the representation \eqref{equation: degenerated PDE for HEE} holds  in the whole domain for some function $f$. 

\subsection{Stream function formulation for flows strictly away from stagnation}
The aim of this subsection is to show that the representation \eqref{equation: degenerated PDE for HEE} holds in the whole strip when the flow is strictly away from stagnation. In fact, when the flow in a strip is strictly away from stagnation, the stream function formulation for incompressible Euler system has been studied extensively in \cite{Hamel_Nadirashvili_1}.

Consider the flows in a strip $\Omega_0$, let $\sigma:[0,\infty)\to \overline{\Omega_0}$ be the solution of 
\begin{equation}\label{sigmaprime}
	\left\{
	\begin{aligned}
		&	\sigma'(t) = \nabla \varphi(\sigma(t)),\\
		&	\sigma(0) = (0,0).
	\end{aligned}
	\right.
\end{equation}

\begin{remark}\label{remark2.1}
	Since the solutions of hydrostatic Euler equations satisfy the slip boundary condition \eqref{boundary condition for the flow in perturbed domain}, one has $v_2=0$ on $\partial \Omega_0$. If 
\(\underset{x \in \Omega_0}{\inf}|v(x)| > 0\), then 
$\underset{x_1 \in \mathbb{R}}{\inf}|v(x_1, 0)| > 0$. Without loss of generality, we assume that 
\begin{equation}\label{v1positive}
	v_1(x_1, 0)>0\quad \text{for}\,\, x_1 \in \mathbb{R}
\end{equation}
for flows in  strip $\Omega_0$ studied in the rest of paper.
\end{remark}

With the assumption in previous remark, it follows from  \cite[Remark~2.2]{Hamel_Nadirashvili_1} that $\sigma$ is defined in a maximal time interval $[0, \tau]$ for some $\tau \in (0, +\infty)$.

First, one has the following lemma.
\begin{lemma}[\cite{Hamel_Nadirashvili_1}]\label{endpoint_lemma}
Let $\sigma$ satisfy \eqref{sigmaprime} and flow be strictly away from stagnation.  Then the following statements hold.
\begin{enumerate}
	\item[(a)]	The second end point of the trajectory $\Sigma_{(0,0)}: = \sigma([0,\tau])$ lies on the horizontal line $x_2 = 1$, namely
	\begin{equation*}
		\sigma(\tau) = (\xi, 1) \quad \text{for some } \xi\in \RR.
	\end{equation*}
	
\item[(b)]	For any $x$ in $\overline{\Omega}$, the corresponding regular streamline $\Gamma_x$, which is the streamline of the velocity field passing through $x$, must intersect with $\Sigma_{(0,0)} = \sigma([0, \tau])$.
\end{enumerate}
\end{lemma}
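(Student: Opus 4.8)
The plan is to exploit that $\sigma$ is a gradient flow line of the stream function $\varphi$. Since $\sigma'(t)=\nabla\varphi(\sigma(t))$ with $\nabla\varphi=(-v_2,v_1)$ and $|\nabla\varphi|=|v|\ge\epsilon_0>0$, the map $t\mapsto\varphi(\sigma(t))$ satisfies $\frac{d}{dt}\varphi(\sigma(t))=|\nabla\varphi(\sigma(t))|^2\ge\epsilon_0^2$, so it is strictly increasing; in particular $\sigma$ is injective and has no self-intersections. I will use this monotonicity throughout.

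For part (a), since $[0,\tau]$ is maximal with $\tau<\infty$, standard ODE theory forces $\sigma(\tau)\in\partial\Omega_0$: the field $\nabla\varphi$ is $C^1$ and nonvanishing, so through any interior point the flow extends, which would contradict maximality were $\sigma(\tau)$ interior. I would then exclude the bottom wall. On $\{x_2=0\}$ the slip condition gives $v_2=0$, hence $\nabla\varphi=(0,v_1(\cdot,0))$ points strictly into the strip by the normalization $v_1(\cdot,0)>0$ of Remark~\ref{remark2.1}; thus $\sigma_2$ is strictly increasing whenever $\sigma$ touches $\{x_2=0\}$, so once $\sigma$ leaves the bottom it cannot return and $\sigma(\tau)\notin\{x_2=0\}$. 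The only remaining possibility is $\sigma(\tau)\in\{x_2=1\}$, i.e. $\sigma(\tau)=(\xi,1)$.

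For part (b), I would first record that $\varphi$ is constant on each wall (since $\partial_{x_1}\varphi=-v_2=0$ there), normalize $\varphi\equiv0$ on $\{x_2=0\}$ and obtain $\varphi\equiv Q$ on $\{x_2=1\}$, where $Q=\int_0^1 v_1\,dx_2$ is the $x_1$-independent flux. Part (a) then shows $\varphi\circ\sigma$ increases from $0$ to $Q$, so $Q>0$ and $\Sigma_{(0,0)}$ meets every level $\{\varphi=c\}$ with $c\in[0,Q]$. Running the same gradient flow forward and backward through an arbitrary $x$ and repeating the argument of (a) in both directions shows that the gradient line through $x$ joins the two walls, whence $\varphi(x)\in[0,Q]$ for every $x\in\overline{\Omega_0}$. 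By (a) and injectivity, $\Sigma_{(0,0)}$ is a simple arc joining $\{x_2=0\}$ to $\{x_2=1\}$, so it separates $\overline{\Omega_0}$ into two components reaching $x_1\to-\infty$ and $x_1\to+\infty$ respectively. I would then classify the streamlines: since $v\ne0$ there are no equilibria, and by the planar index theorem no closed streamlines (a periodic orbit would enclose a zero of $v$); a streamline at an interior level cannot touch $\partial\Omega_0$, so it is a properly embedded curve whose two ends escape to the two horizontal ends of the strip, one on each side of $\Sigma_{(0,0)}$. A connected curve meeting both components must cross their common separator, giving $\Gamma_x\cap\Sigma_{(0,0)}\ne\emptyset$; streamlines lying on a wall meet $\Sigma_{(0,0)}$ at $\sigma(0)$ or $\sigma(\tau)$.

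The main obstacle is the global classification of streamlines in part (b): besides ruling out closed orbits, one must exclude streamlines that turn back with both ends escaping toward the same horizontal end of the strip, as such a curve could in principle avoid $\Sigma_{(0,0)}$. This is exactly where the non-stagnation hypothesis $\epsilon_0>0$ and two-dimensional topology are essential, and where I would rely on the gradient-flow foliation (every point lies on a bottom-to-top gradient line) to pin down both the range $\varphi\in[0,Q]$ and the end-to-end behavior of the level curves.
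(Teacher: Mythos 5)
First, note that the paper does not prove this lemma at all: it is imported verbatim from \cite{Hamel_Nadirashvili_1} (together with Lemma~\ref{lemma about the boundedness of u}), so there is no in-paper argument to compare against; your proposal must stand on its own. Your part (a) does: the monotonicity $\frac{d}{dt}\varphi(\sigma(t))=|\nabla\varphi(\sigma(t))|^2\ge\epsilon_0^2$, the forced termination on $\partial\Omega_0$, and the exclusion of the bottom wall via $\nabla\varphi=(0,v_1(\cdot,0))$ pointing inward are exactly the standard ingredients, and the argument is complete once one grants (as the paper does, citing \cite{Hamel_Nadirashvili_1}) that $\tau<\infty$.

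Part (b), however, has a genuine gap, and it is the one you yourself flag at the end: your separation argument needs every interior-level streamline to be a properly embedded curve whose two ends lie in the two different components of $\overline{\Omega_0}\setminus\Sigma_{(0,0)}$, and nothing you wrote establishes this. Ruling out equilibria and closed orbits does not rule out an orbit that is unbounded but returns infinitely often to a compact set, nor one whose two ends both escape toward $x_1\to+\infty$ on the same side of $\Sigma_{(0,0)}$; such a streamline would never meet $\Sigma_{(0,0)}$, and this possibility is precisely the content of the lemma, not a technicality. The clean way to close the gap --- and essentially what is done in \cite{Hamel_Nadirashvili_1} --- is to avoid the Jordan-separation route altogether and instead show that for each $c\in[0,\mfm]$ the level set $\{\varphi=c\}$ is \emph{connected}, hence a single streamline. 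Concretely: by the part (a) argument run forward and backward, the gradient line through any $(r,0)$ crosses every level $c\in[0,\mfm]$ exactly once, at a point $P_c(r)$ depending continuously on $r$ (continuity of the flow map plus $\frac{d}{dt}\varphi\ge\epsilon_0^2$, which makes the hitting time continuous); the map $r\mapsto P_c(r)$ is onto $\{\varphi=c\}$ (flow backward from any point of the level set to the bottom wall), so $\{\varphi=c\}$ is the continuous image of $\RR$ and is connected. Since $\varphi\circ\sigma$ is a continuous surjection of $[0,\tau]$ onto $[0,\mfm]$, the level set $\{\varphi=\varphi(x)\}=\Gamma_x$ contains a point of $\Sigma_{(0,0)}$ for every $x$. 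Without this (or an equivalent open-and-closed connectedness argument on $\{x:\Gamma_x\cap\Sigma_{(0,0)}\neq\emptyset\}$), your proof of (b) is incomplete.
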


Furthermore, we have the following lemma.
\begin{lemma}[\cite{Hamel_Nadirashvili_1}, Lemma~2.6]\label{lemma about the boundedness of u}
	Assume that $\underset{\Omega_0}{\inf}\ |v| > 0$ and $v_2(x) = 0$ on $\pd{\Omega_0}{}$. If \eqref{v1positive} holds, then there exists a constant $\mfm>0$ such that the associated stream function $\varphi$ is bounded in $\Omega_0$ with
	\begin{equation*}
		0 < \varphi <\mfm\quad\text{in}\quad\Omega_0,
	\end{equation*}
	\begin{equation*}
		\varphi|_{x_2 = 0} = 0,\quad\text{and}\quad\varphi|_{x_2 = 1} = \mfm.
	\end{equation*}
\end{lemma}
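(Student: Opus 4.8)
The plan is to establish the three conclusions in turn, leveraging the trajectory $\sigma$ and the structural results already granted in Lemma~\ref{endpoint_lemma}. First I would fix the boundary values of $\varphi$. By the slip boundary condition, $v_2 = -\partial_{x_1}\varphi = 0$ along $\partial\Omega_0$, so $\varphi$ is constant on each of the two horizontal lines $x_2 = 0$ and $x_2 = 1$. Since $\varphi$ is only determined up to an additive constant, I normalize $\varphi|_{x_2=0} = 0$, using the base point $\sigma(0) = (0,0)$ of the trajectory. It then remains to identify the constant value $\mfm$ on the top boundary and to prove the strict inequalities in the interior.

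Next I would use the trajectory $\sigma$ to compute the value on the top line. Along the streamline $\sigma$, one has $\frac{d}{dt}\varphi(\sigma(t)) = \nabla\varphi(\sigma(t))\cdot\sigma'(t) = |\nabla\varphi(\sigma(t))|^2 = |v(\sigma(t))|^2 > 0$ by \eqref{sigmaprime} and the no-stagnation hypothesis. Hence $\varphi$ is strictly increasing along $\sigma$. By Lemma~\ref{endpoint_lemma}(a), the endpoint $\sigma(\tau)$ lies on the top line $x_2 = 1$, so setting
\[
\mfm := \varphi(\sigma(\tau)) = \iizo \ \text{(reinterpreted as)}\ \int_0^\tau |v(\sigma(t))|^2\, dt > 0,
\]
I obtain a strictly positive constant, and because $\varphi$ is constant on the whole top line this shows $\varphi|_{x_2=1} = \mfm > 0$. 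This simultaneously pins down $\mfm$ and shows $\mfm > 0$.

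The heart of the argument, and the step I expect to be the main obstacle, is the interior bound $0 < \varphi < \mfm$ in $\Omega_0$. The strategy is to exploit Lemma~\ref{endpoint_lemma}(b): for any $x \in \overline{\Omega_0}$, the streamline $\Gamma_x$ through $x$ must intersect the trajectory $\Sigma_{(0,0)} = \sigma([0,\tau])$ at some point $\sigma(t_x)$. Since $\varphi$ is constant along any streamline (as $\nabla\varphi\cdot\nabla^\perp\varphi = v\cdot\nabla\varphi \equiv 0$), we get $\varphi(x) = \varphi(\sigma(t_x))$. Because $\varphi(\sigma(\cdot))$ is strictly monotone increasing from $0$ to $\mfm$ on $[0,\tau]$, its range is exactly $[0,\mfm]$, and the interior value $\varphi(x)$ for $x \in \Omega_0$ corresponds to $t_x \in (0,\tau)$, forcing $0 < \varphi(x) < \mfm$. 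The delicate point is ruling out the endpoint cases: I must verify that an interior point cannot lie on a streamline that meets $\Sigma_{(0,0)}$ only at $\sigma(0)$ or $\sigma(\tau)$, which would force $\varphi(x) \in \{0,\mfm\}$ in the interior. This is handled by observing that the only streamlines carrying the boundary values $0$ and $\mfm$ are the boundary lines themselves, so the strict inequalities persist throughout the open strip; making this identification rigorous — that the level sets $\{\varphi = 0\}$ and $\{\varphi = \mfm\}$ coincide with the boundary components — is where the no-stagnation hypothesis and the geometry from Lemma~\ref{endpoint_lemma} must be combined carefully.
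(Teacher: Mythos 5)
Your proposal is correct and reconstructs essentially the argument of Hamel--Nadirashvili that the paper itself cites without proof: boundary values are fixed by $v_2=-\partial_{x_1}\varphi=0$ on $\partial\Omega_0$, the value $\mfm=\int_0^\tau|v(\sigma(t))|^2\,dt>0$ comes from strict monotonicity of $\varphi$ along the gradient trajectory, and the strict interior bounds follow from Lemma~\ref{endpoint_lemma}(b) together with constancy of $\varphi$ on streamlines. The one point you flag as delicate closes cleanly: since $v_2=0$ and $v_1\neq 0$ on $\partial\Omega_0$, the two boundary lines are themselves integral curves of $v$, so by uniqueness of integral curves of the $C^1$ field a streamline through an interior point never meets the boundary, forcing $t_x\in(0,\tau)$ and hence $0<\varphi(x)<\mfm$.
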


\subsection{Stream function formulation for flows in general nozzles}
In order to study hydrostatic Euler equations in a general nozzle, we need to show that the stream function $\varphi$ of the flow satisfies the degenerate elliptic equation~\eqref{equation: degenerated PDE for HEE} in $\Omega_p$ globally and  determine the precise form of the function $f$.
More precisely, we follow the idea in \cite{Xie_Xin} and study the steady solution of \eqref{equation : steady state hydrostatic Euler equation} by prescribing the upstream horizontal velocity profile. Note that the stream function in the upstream can be defined as
\begin{equation}\label{equation: relation of stream function and horizontal velocity in upstream}
	\varphi^{-}(x_2) := \int_{0}^{x_2}v_1^-(\tau)\,d\tau.
\end{equation}
Clearly, $ \varphi^- \in [0, \mfc]$ for $x_2\in [0,1]$, where the constant $\mfc$ is defined as
\begin{equation}\label{mass flux constant}
	\mfc := \int_0^1 v_1^-(\tau)\,d\tau.
\end{equation}
It is worth noting that $\varphi^{-}$ is a strictly increasing function of $\xx{2}$, since it is assumed that $v_1^- > 0$. Hence the inverse function of $\varphi^{-}$ is well-defined and we denote it by $\kappa$, i.e.,
\begin{equation}\label{definition: kappa}
	\varphi = \ii{0}{\kappa(\varphi)}(v_1^-)(\tau)\,d\tau.
\end{equation}
Let the function $f$  be defined by 
\begin{equation}\label{deff}
	f(\cdot) := (v^-_1)'(\kappa(\cdot)).
\end{equation}

With the notations defined above, we prove the following lemma that asserts the equivalence between  the hydrostatic Euler equations and  a  second-order degenerate elliptic PDE.
\begin{lemma}\label{lemma: equivalence lemma}
	Let $\Omega_p$ be defined as in \eqref{definition: definition of Omega_p} and satisfy Assumption~\ref{structural assumption on Omega p}. Assume that $v_1^-\in C^2[0,1]$  satisfies
	\[v_1^- > 0.\]
	Let $\mfc$ and $f$ be defined in \eqref{mass flux constant} and \eqref{deff}, respectively.
	Then the following statements hold.
	\begin{enumerate}[(i)]
		\item Let $(v_1, v_2, p)$ be a classical solution to the steady hydrostatic Euler equations \eqref{equation : steady state hydrostatic Euler equation} supplemented with boundary condition \eqref{boundary condition for the flow in perturbed domain}, and satisfy $v_1 > 0$ in $\Op$ and \eqref{upstreamcond}. Then the stream function $\varphi$, which satisfies $\gra\varphi = (-v_2, v_1)$ and $\varphi(0, s_0(0)) = 0$, solves the boundary value problem
		\begin{equation}\label{equation: elliptic PDE in perturbed domain without flattening}
			\begin{cases}
				\pd{\varphi}{x_2 x_2} = f(\varphi)\quad\text{in}\ \Omega_p\\
				\varphi(x_1, s_0(x_1)) = 0,\quad\varphi(x_1, s_1(x_1)) = \mfc.
			\end{cases}
		\end{equation}
		\item Let  $\varphi$ be a unique solution to \eqref{equation: elliptic PDE in perturbed domain without flattening} in $\mathcal{Y}$, which is defined as
		\begin{equation}\label{defintion: function space Y cal}
			\begin{aligned}
				\mathcal{Y} := \big\{\varphi: \varphi, \pd{\varphi}{x_1}, \pd{\varphi}{x_2}, \pd{\varphi}{x_1 x_2},\pd{\varphi}{x_2 x_2}\in C(\bbar{\Om_p}),\\
				\varphi(x_1, s_0(x_1)) = 0, \varphi(x_1, s_1(x_1)) = \mfc\big\}
			\end{aligned}
		\end{equation}
		equipped with the norm
		\begin{equation*}
			||\varphi||_{\mathcal{Y}} = \ssu{\bbar{\Op}}(|\varphi| + |\pd{\varphi}{x_1}| + |\pd{\varphi}{x_2}| + |\pd{\varphi}{x_1 x_2}| + |\pd{\varphi}{x_2 x_2}|).
		\end{equation*}
		If $\pd{\varphi}{x_2} > 0$, then $(v_1, v_2, p)$ satisfies the steady hydrostatic Euler equations \eqref{equation : steady state hydrostatic Euler equation} in $\Omega_p$ supplemented with boundary condition \eqref{boundary condition for the flow in perturbed domain}, where
		\begin{equation}\label{definition: defined flow from stream function}
			v_1 := \pd{\varphi}{x_2},\quad v_2 := -\pd{\varphi}{x_1},\quad p := -\frac{(\pd{\varphi}{x_2})^2}{2} + F(\varphi).
		\end{equation}
	Here, $F$ is a primitive function of $f$.
	\end{enumerate}
\end{lemma}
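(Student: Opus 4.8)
\emph{Proof proposal.} The plan is to prove the two implications separately: the backward direction~(ii) reduces to a direct verification, while the forward direction~(i) carries essentially all of the work. For part~(ii), I would substitute the definitions \eqref{definition: defined flow from stream function} into the system \eqref{equation : steady state hydrostatic Euler equation} and check each equation in turn. The incompressibility $\pd{v_1}{x_1}+\pd{v_2}{x_2}=\pd{\varphi}{x_1 x_2}-\pd{\varphi}{x_2 x_1}=0$ is automatic from the regularity built into $\mathcal{Y}$. For the vertical momentum equation one finds $-\pd{p}{x_2}=\pd{\varphi}{x_2}\,\pd{\varphi}{x_2 x_2}-f(\varphi)\,\pd{\varphi}{x_2}=\pd{\varphi}{x_2}\big(\pd{\varphi}{x_2 x_2}-f(\varphi)\big)$, which vanishes by the equation in \eqref{equation: elliptic PDE in perturbed domain without flattening}; the horizontal equation follows from the analogous one-line computation, using $\pd{v_1}{x_2}=\pd{\varphi}{x_2 x_2}=f(\varphi)$ and the chain rule on $F(\varphi)$. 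Finally, since $\varphi$ is constant on each wall $S_i$ by the Dirichlet data in \eqref{defintion: function space Y cal}, the field $v=\nabla^\perp\varphi$ is tangent to the walls, which is exactly the slip condition \eqref{boundary condition for the flow in perturbed domain}.

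For part~(i), I would first obtain the stream function $\varphi$ from incompressibility and simple connectedness, normalized by $\varphi(0,s_0(0))=0$. The slip condition forces each wall to be a streamline, so $\varphi\equiv 0$ on $S_0$; the constant value on $S_1$ is identified by writing $\varphi(x_1,s_1(x_1))=\int_{s_0(x_1)}^{s_1(x_1)}v_1\,dx_2$ and passing to the upstream limit, where Assumption~\ref{structural assumption on Omega p} and \eqref{upstreamcond} yield $\int_{s_0(x_1)}^{s_1(x_1)}v_1\,dx_2\to\int_0^1 v_1^-\,dx_2=\mfc$. Since $v_1>0$, the map $x_2\mapsto\varphi(x_1,x_2)$ is, for each fixed $x_1$, a strictly increasing bijection from $(s_0(x_1),s_1(x_1))$ onto $(0,\mfc)$, so each level set $\{\varphi=c\}$ is a connected graph $x_2=\eta_c(x_1)$ over all of $\RR$, and these graphs coincide with the streamlines. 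The conservation law \eqref{equation : conservation of vorticity} then shows that $\omega=\pd{\varphi}{x_2 x_2}$ is constant on each connected streamline, which upgrades the local relation $\omega=\omega(\varphi)$ to a global one: $\omega=g(\varphi)$ throughout $\Op$ for a single continuous function $g$.

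It then remains to identify $g$ with $f$. Here I would integrate $\pd{\varphi}{x_2 x_2}=g(\varphi)$ in $x_2$ between two interior heights $y_0<x_2$ and let $x_1\to-\infty$; using $v_1\to v_1^-$ from \eqref{upstreamcond} together with $\varphi(x_1,\cdot)\to\varphi^-$, the latter obtained by integrating \eqref{equation: relation of stream function and horizontal velocity in upstream}, the limit reads $v_1^-(x_2)-v_1^-(y_0)=\int_{y_0}^{x_2}g(\varphi^-(s))\,ds$. Differentiating in $x_2$ gives $(v_1^-)'(x_2)=g(\varphi^-(x_2))$, and substituting $x_2=\kappa(\varphi)$ yields $g(\varphi)=(v_1^-)'(\kappa(\varphi))=f(\varphi)$ by \eqref{definition: kappa} and \eqref{deff}, which establishes the equation in \eqref{equation: elliptic PDE in perturbed domain without flattening}.

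The main obstacle lies entirely in part~(i), in making these upstream limits rigorous: the hypothesis \eqref{upstreamcond} only provides convergence on compact subsets of $(0,1)$, whereas the integrals defining the flux on $S_1$ and the profile $\varphi^-$ reach down to the walls $s_0,s_1$, which tend to the endpoints $0$ and $1$. I expect to control this by using the boundedness of $v_1$ to make the contributions near the walls uniformly small and then letting the cutoff tend to the boundary, so that the foliation and the passage to the limit are the only genuinely analytic steps. By contrast, the backward direction~(ii) is purely algebraic and should present no difficulty beyond bookkeeping.
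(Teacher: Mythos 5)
Your proposal is correct, and its overall architecture matches the paper's: part (ii) is the same direct verification, and part (i) runs through the stream function, the flux computation for the two boundary values, the foliation coming from $v_1>0$, conservation of vorticity, and an upstream limit. The one genuine difference is how the global relation $\pd{\varphi}{x_2 x_2}=f(\varphi)$ is established. The paper works in Lagrangian variables: it solves the streamline ODE through each point of $\Op$, shows the trajectory cannot terminate and must converge to the upstream height $\kappa(\varphi)$ as $t\to-\infty$, and then transports the vorticity along that single trajectory to read off $\omega=(v_1^-)'(\kappa(\varphi))$ directly. You instead argue in Eulerian terms: since $x_2\mapsto\varphi(x_1,x_2)$ is a strictly increasing bijection onto $[0,\mfc]$ for every $x_1$, each level set is a single connected graph over all of $\RR$, so $\gra^\perp\varphi\cdot\gra\omega=0$ immediately yields $\omega=g(\varphi)$ for some continuous $g$, and the identification $g=f$ becomes a separate limiting step obtained by integrating the equation in $x_2$ and letting $x_1\to-\infty$. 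This two-step decomposition buys you something: you never need the dynamical statement that each individual streamline tends to a definite height at upstream (which the paper asserts with only a brief justification); you only need convergence of $v_1(x_1,\cdot)$ and $\varphi(x_1,\cdot)$, which sits closer to the hypothesis \eqref{upstreamcond}. The analytic price is the same in both arguments, and you have identified it correctly: \eqref{upstreamcond} gives convergence only on compact subsets of $(0,1)$, so the integrals that reach the walls require the extra control near $s_0,s_1$ that you describe; the paper's own proof passes over the identical point (e.g.\ in deriving $\varphi(0,s_1(0))=\mfc$ and in the distributional convergence of $\omega(x_1,\cdot)$ to $(v_1^-)'$), so this is not a gap relative to the paper's standard of rigor.
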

\begin{proof}
	(i) We first verify the stream function satisfies the boundary conditions in \eqref{equation: elliptic PDE in perturbed domain without flattening}. Let $\gamma$ be a $C^1$ path with initial point $(0, s_0(0))$ and terminal point $(x_1, x_2)\in\Omega_p$. It follows from the incompressibility condition and the fact that $\Omega_p$ is simply connected that there exists a $C^1$-function $\wt{\varphi}$ such
	\[\int_\gamma(-v_2, v_1)\cdot d{\bf s} = \wt{\varphi}(x_1, x_2) - \wt{\varphi}(0, s_0(0)).\]
	Since the stream function is well-defined up to a constant, we define
	\[\varphi(x_1,x_2) := \wt{\varphi}(x_1,x_2) - \wt{\varphi}(0,s_0(0)),\]
	which implies
	\begin{equation}\label{stream function equals to zero at (0, s_0(0))}
		\varphi(0,s_0(0)) = 0.
	\end{equation}
	Hence it follows from \eqref{boundary condition for the flow in perturbed domain}, \eqref{negative gradient per of stream function} and \eqref{stream function equals to zero at (0, s_0(0))} that $\varphi(x_1, s_0(x_1)) = 0$. In addition, with the aid of divergence theorem, incompressibility condition, \eqref{upstreamcond}, \eqref{boundary condition for the flow in perturbed domain}, and \eqref{mass flux constant}, one has
	\[\varphi(0,s_1(0)) = \mfc,\]
	and hence $\varphi(x_1, s_1(x_1)) = \mfc$.
	
	For any points $(x_1, x_2)\in \Op$, there exists a unique streamline that satisfies the system
	\begin{equation}\label{equation: ODE for streamline}
		\begin{cases}
			&\frac{dX_1}{dt}=v_1(X_1(t), X_2(t)),\\
			&\frac{dX_2}{dt}=v_2(X_1(t), X_2(t)),\\
			&(X_1(0), X_2(0))=(x_1, x_2),
		\end{cases}
	\end{equation}
	and the streamline can be defined globally in $\Omega_p$.  As long as $v=(v_1, v_2)\in C^1$, there exists a unique streamline passing through each point. 
	
	In addition, we claim that the streamline through every point inside $\Omega_p$ will never touch $\partial\Omega_p$. Otherwise, suppose that the streamline passing through $(x_1^0, x_2^0)\in\Omega_p$ touches $\partial\Op$ at $(x_1^1, s_0(x_1^1))$. It follows from Green's theorem that one has
	\begin{equation}\label{identity: integral of v1 along vertical line equals 0}
		\int_{s_0(x_1^0)}^{x_2^0} v_1(x_1^0, x_2)\,dx_2 = 0,
	\end{equation}
	which contradicts the assumption that $v_1 > 0$.
	
Clearly,  the stream function is constant along each streamline. On the other hand, it was proved that given that $v_1 > 0$ in $\Omega_p$, for any $(x_1, x_2)\in \Omega_p$,   the streamline passing through $(x_1,x_2)$ has to tend to upstream at $(-\infty, \kappa(\varphi))$ with $\varphi = \varphi(x_1, x_2)$. To prove this claim, we consider the initial value problem  \eqref{equation: ODE for streamline}. It follows directly from the first equation of \eqref{equation: ODE for streamline} and the assumption $v_1 > 0$ in $\Omega_p$ that the streamline cannot stop at any finite region. On the other hand, because the stream function is constant along streamline, Equality~\eqref{equation: relation of stream function and horizontal velocity in upstream} and the assumption $v_1^- > 0$, we can conclude that $X_2(t)\to\kappa(\varphi)$ as $t\to-\infty$. It is worth noting that $\kappa(\varphi)$ is well defined for any $(x_1, x_2) \in \Omega_p$. This is because $\pd{\varphi}{x_2} = v_1 > 0$ in $\Omega_p$, it follows that $0 < \varphi < \mfc$ in $\Omega_p$, and hence $\kappa(\varphi)$ is well-defined for any $(x_1, x_2) \in \Omega_p$. In addition, since the vorticity is also preserved along each streamline,
	one has
	\[
	\lim_{t\to-\infty} \omega(X_1(t), X_2(t))=\omega(X_1(0), X_2(0))= \omega(x_1,x_2).
	\]
	If the flows satisfy far field condition \eqref{upstreamcond}, then one has
	\[
	\omega(x_1, \cdot) =\partial_{x_2} v_1(x_1, \cdot) \to (v_1^-)'(\cdot)\quad  \text{in the sense of distribution as}\,\,x_1\to -\infty. 
	\]
	 Therefore, it follows from the conservation of the vorticity and the property  $X_2(t)\to\kappa(\varphi)$ as $t\to-\infty$ that
	\[\omega(x_1, x_2) = (v_1^-)'(\kappa(\varphi(x_1, x_2)).\]
	Furthermore, it follows from \eqref{definition: vorticity of the hydrostatic Euler equations} that $\varphi$ satisfies
	\begin{equation}\label{Equation: second order degenerate PDE for stream function}
		\pd{\varphi}{x_2 x_2} = (v_1^-)'(\kappa(\varphi))=:f(\varphi).
	\end{equation}

	(ii) Suppose that $\varphi$ is a solution of \eqref{equation: elliptic PDE in perturbed domain without flattening}. Then    direct computations yield
	\begin{equation*}
		\pd{v_1}{x_1} + \pd{v_2}{x_2} = \pd{\varphi}{x_2 x_1} - \pd{\varphi}{x_1 x_2} = 0.
	\end{equation*}
Hence the flow defined in \eqref{definition: defined flow from stream function} satisfies the incompressibility condition. Moreover, using Equation~$\eqref{equation: elliptic PDE in perturbed domain without flattening}_1$, we also have
	\begin{equation*}
		-\pd{p}{x_1} = -\pd{\left(-\frac{(\pd{\varphi}{x_2})^2}{2} + F(\varphi)\right)}{x_1} = v_1\pd{v_1}{x_1} + v_2\pd{v_1}{x_2}
	\end{equation*}
	and
	\begin{equation*}
		\pd{p}{x_2} = \pd{\left(\frac{(\pd{\varphi}{x_2})^2}{2} - F(\varphi)\right)}{x_2}
		= \pd{\varphi}{x_2}\pd{\varphi}{x_2x_2} - f(\varphi)\pd{\varphi}{x_2}
		= 0.
	\end{equation*}
Therefore, the flow defined by \eqref{definition: defined flow from stream function} satisfies the steady hydrostatic Euler equations. Lastly, the boundary condition \eqref{boundary condition for the flow in perturbed domain} follows immediately from the fact that $\varphi$ is constant along the boundary and definition \eqref{definition: defined flow from stream function}.
\end{proof}

With the aid of Lemma \ref{lemma: equivalence lemma}, in order to study the existence, uniqueness, and asymptotic behaviors of solutions to the steady hydrostatic Euler equations, we only need to  investigate the boundary value problem \eqref{equation: elliptic PDE in perturbed domain without flattening}.

\begin{remark}
	It is worth noting that the stream function of the steady incompressible Euler equations satisfies
	\begin{equation}\label{equation: uniformly elliptic PDE for IEE}
		\pd{\varphi}{x_1 x_1} + \pd{\varphi}{x_2 x_2} = f(\varphi),
	\end{equation}
	which is a semilinear and uniformly elliptic PDE. Comparing \eqref{equation: degenerated PDE for HEE} with \eqref{equation: uniformly elliptic PDE for IEE},  there is a loss of second-order derivative in the $x_1$-direction. This means that the stream function of the hydrostatic Euler equations satisfies a degenerate elliptic equation \eqref{equation: degenerated PDE for HEE}. Hence the standard estimates for uniformly elliptic equations in \cite{Berestycki_Caffarelli_Nireberg, Gilbarg_Trudinger}, which play an essential role in the proof of \cite{Hamel_Nadirashvili_1}, do not work very well. This is the major difficulty for the analysis on hydrostatic Euler equations.
\end{remark}

 If we regard $x_1$ as a parameter, then equation \eqref{equation: degenerated PDE for HEE} is a nonlinear second-order ordinary differential equation in $x_2$. Thus,  the representation formula for solutions of ODE can be used to obtain several important a priori estimates (cf.  Proposition~\ref{existence of solution}). More precisely, for any fixed $x_1 \in \RR$, using the standard fixed point argument, one can  obtain the existence, uniqueness, and $C^2$-regularity in the $x_2$-direction. If \eqref{incomingcond} is satisfied, then the function $f:\RR\to\RR$, which is defined in \eqref{deff}, is a bounded Lipschitz function satisfying
\begin{equation}\label{assumption: sign condition of f on the boundary}
	f(0) \leq 0\leq f(\mfc).
\end{equation}
This helps to show that $\varphi$ is unique and that $\varphi$ satisfies the boundedness condition $0<\varphi<\mfc$ and $\varphi' > 0$. 

A key issue is to prove the regularity of $\varphi$ in the $x_1$-direction so that the induced flows \eqref{definition: defined flow from stream function} is indeed a classical solution to the steady hydrostatic Euler equations \eqref{equation : steady state hydrostatic Euler equation}. With the aid of a transformation combining a change of variables and Euler-Lagrange transformation, we are able to obtain an explicit representation formula for solutions to some associated partial differential equation (PDE), so that the same regularity of $\varphi$ in the $x_1$-direction can be obtained without any further assumption on $f$. 

\section{Liouville type theorem for flows in a strip}\label{section:Liouville-type theorem for the hydrostatic Euler equations}
In this section, we consider the steady hydrostatic Euler equations in ${\Omega}=\Omega_0 := \RR \times (0,1)$ and prove  Theorem~\ref{main result}, which is a Livouille-type theorem for the hydrostatic Euler equations.

It was showed in~\cite{Hamel_Nadirashvili_1}  that two-dimensional steady incompressible Euler flows in a strip must be a  shear flow, as long as the flow is strictly away from stagnation. The proof in \cite{Hamel_Nadirashvili_1} consists of two main ingredients:
\begin{enumerate}[(i)]
	\item geometric properties of streamlines of the velocity field $v(x)$, which is a consequence of property that flows are strictly away from the stagnation; and
	\item governing equation for the stream function $\varphi$ is a semilinear elliptic equation with Lipschitz nonlinearity. 
\end{enumerate}

For the hydrostatic Euler equations, as what we have done in Section \ref{Preliminary}, when the flows are strictly away from stagnation,  the streamlines are also foliated and  the steady hydrostatic Euler equations can be reduced into a degenerate partial differential equation, which looks like an ordinary differential equation. 

For part (ii), instead of the sliding method used in \cite{Hamel_Nadirashvili_1} for semilinear elliptic equation, we combine the analysis developed in \cite{Berestycki_Nirenberg_1} by Berestycki and Nirenberg for the second order ordinary differential equations to prove the Liouville type theorem for the hydrostatic Euler equations. 

In the following, we prove an analogue of \cite[Theorem~1.4]{Hamel_Nadirashvili_1} for the degenerate equation.
\begin{proposition}\label{main_theorem_2}
	Let $\Tilde{\Omega}:=(M,N)\times(0,1)$, where $M,N \in \RR\cup\{\pm\infty\}$, and $\mff:\RR \to \RR$ be a $C^1$ function. Suppose that $\varphi: \overline{\Tilde{\Omega}} \to \RR $ is a $C^2(\overline{\Tilde{\Omega}})$ bounded solution to the equation
	\begin{equation}\label{function relation of hydrostatic euler}
		\pd{\varphi}{x_2 x_2} = \mff(\varphi) 
	\end{equation}
	subject to the boundary conditions
	\begin{equation}\label{boundary condition for u}
		\varphi(x_1, 0) = 0,\quad \varphi(x_1, 1) = \mfm,
	\end{equation}
	where $\mfm>0$ is an arbitrary constant. If the solution $\varphi$ of \eqref{function relation of hydrostatic euler}-\eqref{boundary condition for u} satisfies 
	\begin{equation}\label{boundedness of u}
		0 < \varphi < \mfm,
	\end{equation}
	then $\varphi$ depends on $x_2$ only, namely there exists a function $\bar{\varphi}:[0,1]\to\RR$ such that
	\begin{equation}\label{independence of x1}
		\varphi(x_1, x_2) = \bar{\varphi}(x_2) \quad \text{for} \,\, (x_1, x_2)\in \Tilde{\Omega}.
	\end{equation}
Furthermore, $\varphi$ is strictly increasing.
\end{proposition}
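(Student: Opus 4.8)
The plan is to exploit the fact that equation \eqref{function relation of hydrostatic euler} contains no derivatives in $x_1$, so that it is genuinely a second-order ordinary differential equation in $x_2$ with $x_1$ entering only as a parameter. First I would fix $x_1 \in (M,N)$ and set $u(x_2) := \varphi(x_1, x_2)$, so that $u \in C^2([0,1])$ solves the two-point boundary value problem
\begin{equation*}
u'' = \mff(u) \ \text{ on } [0,1], \qquad u(0) = 0, \quad u(1) = \mfm, \qquad 0 < u < \mfm \ \text{ on } (0,1).
\end{equation*}
The entire statement then reduces to proving that this boundary value problem admits a \emph{unique} solution and that the solution is \emph{strictly increasing}: once this is known, $\varphi(x_1,\cdot)$ must coincide with the single solution $\bar\varphi$ for every $x_1$, which yields \eqref{independence of x1} and the monotonicity simultaneously.

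Next I would establish monotonicity. Since $\mff \in C^1$, multiplying the equation by $u'$ produces the first integral $\tfrac12(u')^2 - F(u) = E$, where $F' = \mff$ and $E$ is a constant, and the goal is to show $u$ has no interior critical point. Because $u(0)=0$ while $u>0$ on $(0,1)$, one has $u'(0)\ge 0$, and a short argument based on the sign of $u''(0)=\mff(0)$ together with the first integral shows $u$ is increasing near $0$; hence the first interior zero $\xi$ of $u'$, if it exists, must be a local maximum, with $c:=u(\xi)\in(0,\mfm)$ and $\mff(c)\le 0$ (the degenerate case $\mff(c)=0$ forces $u\equiv c$, a contradiction, so in fact $\mff(c)<0$). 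The first integral now reads $(u')^2 = 2\bigl(F(u)-F(c)\bigr)$, so $u'$ vanishes at \emph{every} point where $u=c$; since the state $(c,0)$ with $\mff(c)<0$ always produces a local maximum, uniqueness of the initial value problem (valid as $\mff$ is locally Lipschitz) prevents $u$ from ever exceeding $c$, contradicting $u(1)=\mfm>c$. Therefore $u'\neq 0$ on $(0,1)$, and since $u(0)<u(1)$ we conclude $u'>0$, i.e.\ $u$ is strictly increasing.

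Uniqueness then follows by a phase-plane computation: writing $u'=\sqrt{2(F(u)+E)}>0$ on $(0,\mfm)$ and separating variables gives $\int_0^{\mfm}\frac{du}{\sqrt{2(F(u)+E)}}=1$, whose left-hand side is a strictly decreasing function of $E$ on its domain of definition, so this relation determines $E$ uniquely; two increasing solutions then share the same energy $E$ and the same initial state $\bigl(0,\sqrt{2(F(0)+E)}\bigr)$ and hence coincide by ODE uniqueness. The hard part will be the monotonicity step, precisely because $\mff$ is not assumed monotone: the naive comparison of two slices $u,w$ via $z=u-w$, which solves $z''=c(x_2)z$ with $c=(\mff(u)-\mff(w))/(u-w)$, fails since $c$ has no definite sign, and it is exactly the energy/trapping argument above (in the spirit of the ODE analysis of Berestycki--Nirenberg \cite{Berestycki_Nirenberg_1}) that removes the need for any sign or monotonicity hypothesis on $\mff$. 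It is worth noting that the only global information used is the pointwise bound $0<\varphi<\mfm$, and that the conclusion holds whether the $x_1$-interval $(M,N)$ is bounded or infinite.
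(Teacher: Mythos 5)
Your proof is correct and follows the same strategy as the paper: freeze $x_1$, observe that each slice $\varphi(x_1,\cdot)$ solves the two-point boundary value problem $u''=\mff(u)$, $u(0)=0$, $u(1)=\mfm$, $0<u<\mfm$, and conclude from uniqueness and monotonicity of that ODE problem. The only difference is that the paper quotes this ODE fact (Lemma~\ref{Monotonicity of ODE}) as a special case of \cite{Berestycki_Nirenberg_1} without proof, whereas you supply a direct argument via the first integral --- the trapping argument at a putative interior maximum and the strict monotonicity in $E$ of the time map $\int_0^{\mfm}\bigl(2(F(u)+E)\bigr)^{-1/2}\,du$ are both sound.
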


Before presenting the proof of Proposition~\ref{main_theorem_2}, we state  the following lemma without proof, which can be regarded as a special case of \cite[Theorem~1.5(iii)]{Berestycki_Nirenberg_1}. 
\begin{lemma}\label{Monotonicity of ODE}
	Assume that $\mff$ is Lipschitz continuous function, $\eta_0$ and $\eta_1$ are two constants. 	Let $\eta\in C^2(0,1) \cap C[0, 1]$ be a solution to the boundary value problem
	\begin{equation*}
		\begin{cases}
			\eta'' = \mff(\eta) \quad\text{in}\quad(0, 1),\\
			\eta(0) = \eta_0,\quad\eta(1) = \eta_1,
		\end{cases}
	\end{equation*}
	such that
	\begin{equation*}
		\eta_0 < \eta (x_2) < \eta_1\quad\text{for all}\quad x_2\in(0, 1).
	\end{equation*}
Then $\eta$ is strictly increasing and unique.
\end{lemma}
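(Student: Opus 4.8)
The plan is to exploit the fact that the equation $\eta'' = \mff(\eta)$ is \emph{autonomous}, so it carries a first integral, and then to read off both monotonicity and uniqueness from the resulting phase-plane picture. Let $F$ be a primitive of $\mff$; since $\mff$ is Lipschitz, $F \in C^1$ with $F' = \mff$. Multiplying the equation by $\eta'$ and integrating on $(0,1)$ shows that the energy $E(x_2) := \tfrac12 (\eta'(x_2))^2 - F(\eta(x_2))$ is constant, say $E \equiv E_0$, whence
\[
(\eta'(x_2))^2 = 2\,G(\eta(x_2)), \qquad G(s) := F(s) + E_0 .
\]
First I would record two elementary facts. Because $\eta(0) = \eta_0 < \eta_1 = \eta(1)$ and $\eta$ is continuous, the intermediate value theorem shows that every value $s \in (\eta_0, \eta_1)$ is attained by $\eta$ at some interior point; and wherever a value $s$ is attained one must have $G(s) \ge 0$, since otherwise $(\eta')^2 < 0$. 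Hence $G \ge 0$ on $(\eta_0, \eta_1)$.

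The core of the monotonicity claim is to upgrade this to $G > 0$ on $(\eta_0, \eta_1)$. Suppose instead that $G(\bar s) = 0$ for some $\bar s \in (\eta_0, \eta_1)$. Then $\bar s$ is an interior minimizer of $G$, so $G'(\bar s) = \mff(\bar s) = 0$; moreover, choosing $x_* \in (0,1)$ with $\eta(x_*) = \bar s$ gives $\eta'(x_*)^2 = 2 G(\bar s) = 0$. Thus $(\eta(x_*), \eta'(x_*)) = (\bar s, 0)$ is an equilibrium of the first-order system associated with $\eta'' = \mff(\eta)$, whose right-hand side is Lipschitz in $(\eta, \eta')$; by uniqueness for the initial value problem, the constant function $\bar s$ is the only solution through this state, forcing $\eta \equiv \bar s$ and contradicting $\eta(0) = \eta_0 \neq \bar s$. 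Consequently $(\eta')^2 = 2 G(\eta) > 0$ on $(0,1)$, so $\eta'$ never vanishes there, has constant sign, and since $\int_0^1 \eta'\,dx_2 = \eta_1 - \eta_0 > 0$ this sign is positive. Hence $\eta$ is strictly increasing.

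For uniqueness I would use the transit-time (time-map) representation. Let $\eta$ be any admissible solution; being strictly increasing it is invertible, and writing $x_2$ as a function of $s = \eta(x_2)$ with $\tfrac{dx_2}{ds} = 1/\sqrt{2 G(s)}$ and integrating over $s \in (\eta_0, \eta_1)$ yields the constraint
\[
\int_{\eta_0}^{\eta_1} \frac{ds}{\sqrt{2\,(F(s) + E_0)}} = 1 ,
\]
which must hold with the energy $E_0$ of that solution. The map $E \mapsto \int_{\eta_0}^{\eta_1} \bigl(2(F(s)+E)\bigr)^{-1/2}\,ds$ is strictly decreasing wherever defined, because the integrand strictly decreases in $E$ at each fixed $s$; therefore the equation above determines $E_0$ uniquely. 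Two admissible solutions thus share the same energy, and then the relation $x_2 = \int_{\eta_0}^{s} \bigl(2(F(\sigma)+E_0)\bigr)^{-1/2}\,d\sigma$ pins down the common inverse function, so the two solutions coincide.

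The main obstacle I expect is the endpoint degeneracy: $G$ may vanish at $\eta_0$ or $\eta_1$ (that is, $\eta'$ may vanish at the boundary), so the transit-time integral is in general improper, and one must justify both its convergence and the validity of the change of variables near the endpoints. This is exactly where the hypothesis that a genuine solution exists on the whole fixed interval $[0,1]$ is used: the finiteness of the length forces the improper integral to converge, while the strict positivity of $G$ on the open interval preserves the pointwise integrand comparison and hence the strict monotonicity of the time map. A secondary technical point, to be handled carefully, is the reduction from the $C^2(0,1) \cap C[0,1]$ regularity to the one-sided endpoint limits of $\eta'$ invoked above, together with the standard verification that a Lipschitz $\mff$ yields $F \in C^1$ and uniqueness for the associated initial value problem.
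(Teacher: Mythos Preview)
Your proof is correct. The paper does not prove this lemma; it states it without proof as a special case of Theorem~1.5(iii) in Berestycki--Nirenberg, whose method is the sliding method: one compares $\eta$ with its translates $\eta(\cdot+\tau)$ and uses the maximum principle for the linear equation satisfied by the difference to obtain $\eta(\cdot+\tau)>\eta(\cdot)$ for all admissible $\tau>0$, from which strict monotonicity and then uniqueness follow.

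Your route is genuinely different. You exploit the autonomy of $\eta''=\mff(\eta)$ to extract the first integral $\tfrac12(\eta')^2-F(\eta)\equiv E_0$, reduce strict monotonicity to strict positivity of $G=F+E_0$ on $(\eta_0,\eta_1)$ (ruling out interior zeros via ODE uniqueness at an equilibrium), and then pin down $E_0$, and hence the solution, by the time map. This is more elementary and self-contained for the specific autonomous ODE at hand; it is in fact the same mechanism the paper itself exploits later, after the Euler--Lagrange transformation, to obtain the explicit representation of $\Phi$ and of $\beta$ in Proposition~\ref{theorem: regularity of phi inverse}. The sliding method, by contrast, does not use autonomy and extends to non-autonomous equations $\eta''=\mff(x_2,\eta)$ and to genuine PDEs, which is presumably why the authors invoke it as a black box. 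Your handling of the endpoint degeneracy is sound: finiteness of the improper transit-time integral is forced by the existence of a solution on the full interval $[0,1]$, and strict monotonicity of the time map in $E$ survives because the pointwise integrand comparison is strict on the entire open interval $(\eta_0,\eta_1)$.
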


\begin{proof}[Proof of Proposition~\ref{main_theorem_2}]
 For each fixed $x_1$, denote $\varphi^{x_1}(x_2):=\varphi(x_1,x_2)$. Then the problem \eqref{function relation of hydrostatic euler}--\eqref{boundedness of u} can be written as the boundary value problem 
\begin{equation}\label{equation: semi-linear ODE for fixed x_1}
	\left\{
	\begin{aligned}
		&(\varphi^{x_1})'' = \mff(\varphi^{x_1}) ,\\
		&\varphi^{x_1}(0) = 0,\quad \varphi^{x_1}(1) = \mfm,
	\end{aligned}
	\right.
\end{equation}
under the constraint
\begin{equation}\label{boundedness of u_{x_1}}
	0 < \varphi^{x_1} < \mfm.
\end{equation}
As a solution of boundary value problem \eqref{equation: semi-linear ODE for fixed x_1}, the solution $\varphi^{x_1}$ is of class $C^2(0,1)\cap C[0,1]$. Hence  the uniqueness of solutions to the problem \eqref{equation: semi-linear ODE for fixed x_1}--\eqref{boundedness of u_{x_1}} follows from Lemma~\ref{Monotonicity of ODE}. Therefore, we  obtain the same solution ${\varphi}^{x_1}=\bar{\varphi}(x_2)$ for all different $x_1$'s, which implies that the solution to the problem \eqref{function relation of hydrostatic euler}--\eqref{boundedness of u}  depends only on the $x_2$-coordinate, namely \eqref{independence of x1}. It also follows from Lemma~\ref{Monotonicity of ODE} that ${\varphi}^{x_1}$ is strictly increasing, so is $\bar{\varphi}$ in \eqref{independence of x1}. This finishes the proof of the proposition.
\end{proof}

Now we are ready to give the proof of Theorem~\ref{main result}.
\begin{proof}[Proof of Theorem~\ref{main result}]\label{Proof of Theorem 1-1} As mentioned in Remark \ref{remark2.1}, without loss of generality, we assume that $v_1$ satisfies \eqref{v1positive}. Hence Lemmas \ref{endpoint_lemma} and \ref{lemma about the boundedness of u} hold.
	We recall that $\sigma : [0, \tau] \to \overline{\Omega}$ is the solution to the gradient flow \eqref{sigmaprime}, subject to the boundary conditions
	\[
\sigma(0) = (0,0)\quad \text{and}\quad \sigma(\tau) = (\xi, 1)\,\,\text{for some real number } \xi.
\] It follows from the standard theory of ordinary differential equations that $\sigma(t)\in C^3([0, \tau])$  by \eqref{sigmaprime} and the regularity of the underlying velocity field. 
	
	Define the function $\theta(t)$ in the closed interval $[0, \tau]$ as 
	\begin{equation}\label{thetadef}
	\theta(t)=\varphi(\sigma(t)).
	\end{equation}
Note that $\theta(t)\in C^1([0,\tau])$ satisfies
	\begin{equation}\label{thetaprime>0}
	    \theta'(t) = \left|\gra\varphi(\sigma(t))\right|^2 \geq (\underset{x\in\Omega}{\inf}|v|)^2 > 0 \quad \text{for } t\in [0, \tau].
	\end{equation}
	Hence it follows from the inverse function theorem that $\theta : [0, \tau] \to [0, \mfm]$, which is defined by \eqref{thetadef}, is a $C^1$-diffeomorphism from $[0, \tau] \to [0, \mfm]$.
	
	Define $\mff:[0,\mfm]\to\RR$ by
	\begin{equation*}
		\mff(\zeta):= \pd{\varphi}{x_2 x_2}(\sigma(\theta^{-1}(\zeta))).
	\end{equation*}
	Then
	\begin{equation*}
		\mff(\theta(t)) = \pd{\varphi}{x_2 x_2}(\sigma(t)),\quad \text{for any }t \in [0, \tau].
	\end{equation*}
	It is worth noting that for any $\mff\in C^1([0,\mfm])$,  it admits a $C^1$ extension $\tilde{\mff}$ over $\RR$ defined as follows
	\begin{equation*}
		\tilde{\mff}(\zeta) =
		\begin{cases}
			\arctan(\mff'(0)\zeta) + f(0)\quad&\text{for}\,\zeta\in(-\infty, 0),\\
			\mff(\zeta)\quad&\text{for}\,\zeta\in[0,\mfm],\\
			\arctan(\mff'(\mfm)(\zeta-\mfm)) + f(\mfm)\quad&\text{for}\,\zeta\in(\mfm, \infty).
		\end{cases}
	\end{equation*}
	
	Now, it remains to show that the stream function $\varphi$ is a classical solution to the degenerate PDE \eqref{function relation of hydrostatic euler}. It follows from \eqref{equation : conservation of vorticity} that the vorticity $\omega$ must be constant along any streamline of the underlying velocity field $v$. On the other hand, Lemma~\ref{endpoint_lemma} and \eqref{thetaprime>0} assert the unique existence of $t_x \in [0, \tau]$ such that $\sigma(t_x) \in \Gamma_x$ and
	\begin{equation}\label{reason for last identity}
		\theta(t_x) = \varphi(\sigma(t_x)) = \varphi(x).
	\end{equation}
	The situation is depicted in Figure~\ref{fig:illustraction_of_sigma(t_x)}. It is worth noting that for any $y \in\Gamma_x$, $\varphi(y)=\varphi(x)$.
    \begin{figure}
	\centering
	\includegraphics[scale=0.15]{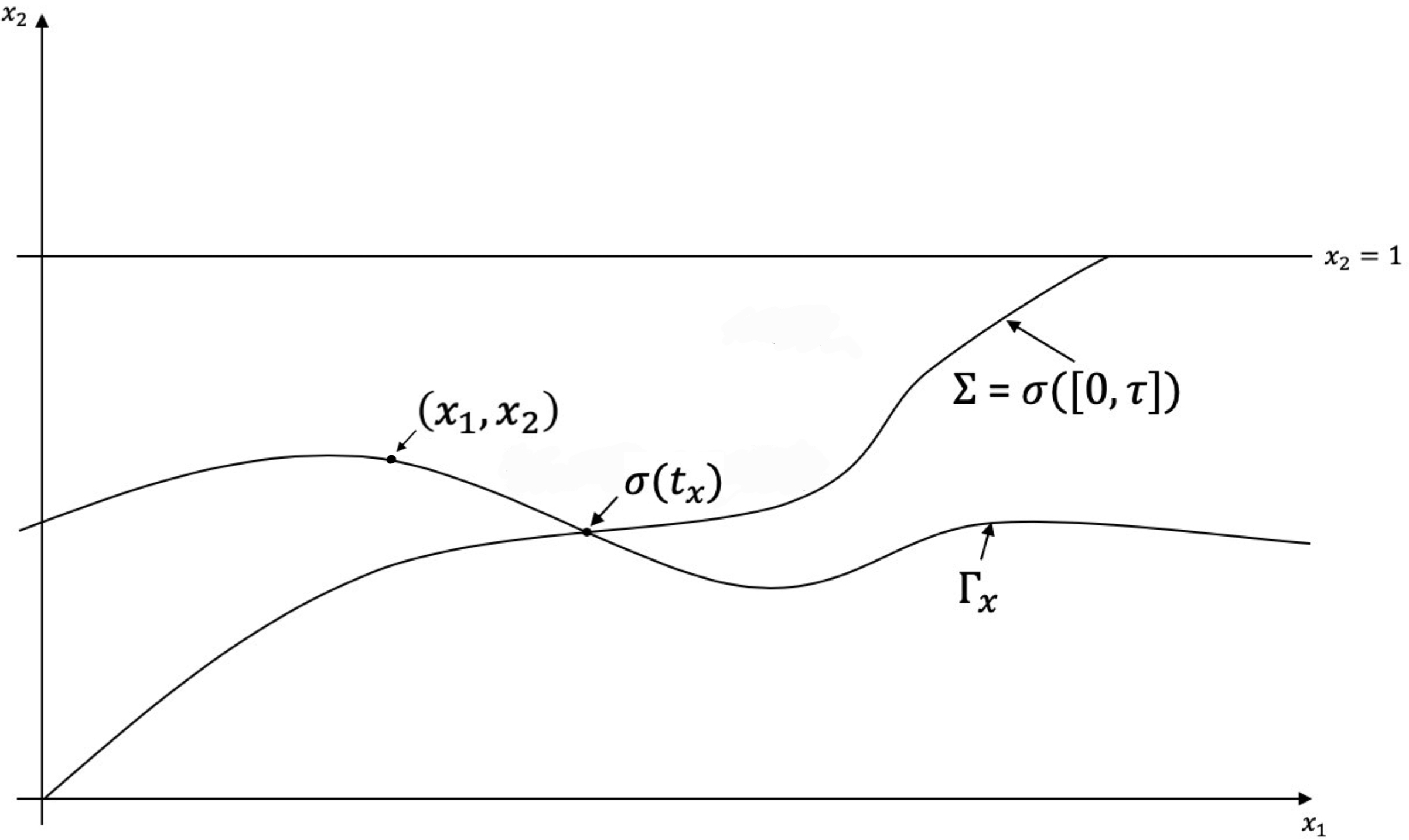}
	\caption{Illustration of $\sigma(t_x)$}
	\label{fig:illustraction_of_sigma(t_x)}
    \end{figure}
	
	Finally, it follows from the relation of the stream function $\varphi$ and the vorticity, namely
	\begin{equation*}
		\pd{\varphi}{x_2 x_2} = \pd{v_1}{x_2} = \omega,
	\end{equation*}
	that $\omega$ is constant on the streamline $\Gamma_x$ which contains both $x$ and $\sigma(t_x)$. Therefore, it follows from the definitions of $\theta$ and $\mff$, and \eqref{reason for last identity} that
	\begin{equation*}
		\begin{aligned}
			\pd{\varphi}{x_2 x_2} (x) &= \omega(x) = \omega(\sigma(t_x))\\
			&=\pd{\varphi}{x_2 x_2} (\sigma(t_x)) =\mff(\theta(t_x)) =\mff(\varphi(x)),
		\end{aligned}
	\end{equation*}
	where the second equality holds since $x$ and $\sigma(t_x)$ are on the same $\Gamma_x$.
It follows from previous calculations and Lemma~\ref{lemma about the boundedness of u} that the stream function $\varphi$ is a classical solution to the boundary value problem \eqref{function relation of hydrostatic euler}--\eqref{boundedness of u}, for some $C^1$ function $\mff:\RR\to\RR$. With the aid of Proposition~\ref{main_theorem_2}, we can conclude that $\varphi$ is independent of the variable $x_1$, namely there exists an $\bar{\varphi}\in C^3([0,1])$ such that
	\begin{equation*}
		\varphi(x) = \bar{\varphi}(x_2)	\quad \text{in }\overline{\Omega}.
	\end{equation*}
Therefore, by the definition of stream function, we have
	\begin{equation*}
		v(x) = (\bar{\varphi}'(x_2), 0)\quad\text{in}\ \overline{\Omega}.
	\end{equation*}
	Moreover, one has $\bar{\varphi}'(x_2) > 0$ for all $x_2 \in (0,1)$ by Proposition~\ref{main_theorem_2}. Hence the proof of Theorem~\ref{main result} is completed.
\end{proof}
\begin{remark}
It should be noted  that under the assumptions of Theorem \ref{main result}, $v_1(x) = \bar{\varphi}'(x_2)$ must be strictly positive (or strictly negative) in $\overline{\Omega}$, even on the boundary $\partial \Omega$. This is because $v$ is assumed to be continuous and $|v|$ has a strictly positive lower bound in $\bar\Omega$.
\end{remark}

\section{Flows in a general infinitely long nozzle}\label{section: Existence of steady state hydrostatic flows in infinite nozzles}
In this section, we study the existence, uniqueness, some fine properties of solutions to the boundary value problem \eqref{equation: elliptic PDE in perturbed domain without flattening} for the stream function. As we discussed in Section~\ref{Preliminary}, the steady hydrostatic Euler equations can be reformulated as a degenerate elliptic equation, so the key issue is to study the existence, uniqueness, and asymptotic behavior of solutions for this degenerate equation. 

First, the spatial domain $\Omega_p$ can be flattened by using a change of variables, which transforms the original boundary value problem \eqref{equation: elliptic PDE in perturbed domain without flattening} to a boundary value problem in $\Omega_0:=\RR\times(0,1)$ (cf. \eqref{transformed boundary value problem}). Second, we regard the new horizontal variable (i.e., $y_1$) as a parameter and observe that for any fixed $y_1\in\RR$, we only need to solve a boundary value problem for second-order semilinear ODE, to which the existence of $C^2$ (in the new vertical variable $y_2$) solutions can be established. Furthermore, the condition \eqref{assumption: sign condition of f on the boundary} helps to show  that the solutions to the ODE problem satisfy the boundedness condition $0<\phi<\mfc$. Hence these solutions  are unique and monotone in the $y_2$-direction because of Lemma~\ref{Monotonicity of ODE}. One of the key issues is to prove the regularity of solutions with respect to $x_1$, which is obtained by studying the associated equation after the Euler-Lagrange transformation.

We introduce the following change of variables:
\begin{equation}\label{change of variables}
	y_1 := x_1,\quad y_2 := \frac{x_2-s_0(x_1)}{s_1(x_1)-s_0(x_1)}.
\end{equation}
Then $\Omega_p$ can be mapped to $\Omega_0$. For any $y\in \Omega_0$, define
\begin{equation}\label{definition: definition of phi}
	\phi(y_1, y_2) := \varphi(y_1, (1-y_2)s_0(y_1) + y_2s_1(y_1))=\varphi(x_1, x_2).
\end{equation}
Then the boundary value problem  \eqref{equation: elliptic PDE in perturbed domain without flattening} becomes
\begin{equation}\label{transformed boundary value problem without extension}
	\begin{cases}
		\pd{\phi}{y_2 y_2} = s^2(y_1)f(\phi)\quad\text{in}\ \Omega_0,\\
		\phi(y_1, 0) = 0,\quad\phi(y_1, 1) = \mfc > 0,
	\end{cases}
\end{equation}
where the function $s$ and $f$ are defined in \eqref{defs} and \eqref{deff} respectively, and the constant $\mfc$ is defined in \eqref{mass flux constant}. 

Note that we do not know $\phi\in [0,\mfc]$ a priorily. The first difficulty for solving the problem \eqref{transformed boundary value problem without extension} is that the function $f$ is only well defined in $[0,\mfc]$. To cope with this issue, our strategy is to extend the function $f$ appropriately, and then show that the extended boundary value problem has a unique solution with values in $[0,\mfc]$.  In particular, we extend the function $f$ to a bounded Lipschitz function $\hat{f}:\RR\to\RR$ as follows,
\begin{equation}\label{extension: Lipschitz extension of f to F}
	\hat{f}(\phi) =
	\begin{cases}
		\displaystyle
		f(0)\quad&\text{if}\ -\infty<\phi\leq0,\\
		f(\phi) \quad&\text{if}\ 0<\phi< \mfc,\\
		\displaystyle
		f(\mfc) \quad&\text{if}\ \mfc\leq\phi<\infty.\\
	\end{cases}
\end{equation}
With the aid of this extension, the boundary value problem \eqref{transformed boundary value problem without extension} becomes
\begin{equation}\label{transformed boundary value problem}
	\begin{cases}
		\pd{\phi}{y_2 y_2} = s^2(y_1)\hat{f}(\phi)\quad\text{in}\ \Omega_0,\\
		\phi(y_1, 0) = 0,\quad\phi(y_1, 1) = \mfc > 0.
	\end{cases}
\end{equation}
One of the key problem is to prove the existence of solutions for \eqref{transformed boundary value problem} satisfying $0\leq \phi \leq \mfc$.

In the following, we prove the existence and uniqueness of solutions to the extended boundary value problem \eqref{transformed boundary value problem} for degenerate elliptic equation via the fixed-point approach.

First, let us recall the following Schauder fixed point theorem.
\begin{proposition}\label{Schauder's fixed point theorem}
	(\cite[Theorem~11.1]{Gilbarg_Trudinger})
	Let $M$ be a compact convex set in a Banach space $X$ and let $T$ be a continuous mapping from $M$ into itself. Then $T$ has a fixed point in $M$. 
\end{proposition}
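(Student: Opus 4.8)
The plan is to reduce the infinite-dimensional fixed point problem to a family of finite-dimensional ones, apply Brouwer's fixed point theorem in each, and then pass to the limit using the compactness of $M$. Brouwer's theorem (every continuous self-map of a compact convex subset of $\RR^n$ has a fixed point) is the one ingredient I would take as given; everything else follows from the Banach space structure and the hypotheses on $M$ and $T$.

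First I would exploit the compactness of $M$. For each $\epsilon>0$ there is a finite $\epsilon$-net $\{x_1,\dots,x_n\}\subset M$ whose open balls $\bal{x_i}{\epsilon}$ cover $M$. Using the continuous functions $\mu_i(x):=\max\{0,\epsilon-\|x-x_i\|\}$, whose sum is strictly positive on $M$ since the balls cover $M$, I would set $\lambda_i(x):=\mu_i(x)/\sum_{j}\mu_j(x)$ and define the \emph{Schauder projection} $P_\epsilon(x):=\sum_{i}\lambda_i(x)\,x_i$. This map is continuous, takes values in the finite-dimensional set $M_\epsilon:=\operatorname{conv}\{x_1,\dots,x_n\}$, and satisfies the crucial approximation estimate $\|P_\epsilon(x)-x\|\le\epsilon$ for every $x\in M$, because $\lambda_i(x)\ne0$ forces $\|x-x_i\|<\epsilon$, whence $\|P_\epsilon(x)-x\|=\|\sum_i\lambda_i(x)(x_i-x)\|\le\sum_i\lambda_i(x)\|x_i-x\|<\epsilon$.

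Next I would consider $T_\epsilon:=P_\epsilon\circ T$ restricted to $M_\epsilon$. Since $M$ is convex and each $x_i\in M$, the set $M_\epsilon\subseteq M$ is a compact convex subset of the finite-dimensional subspace spanned by $x_1,\dots,x_n$, and $T_\epsilon$ maps $M_\epsilon$ continuously into $M_\epsilon$. Brouwer's fixed point theorem then produces $y_\epsilon\in M_\epsilon$ with $P_\epsilon(T(y_\epsilon))=y_\epsilon$, and the approximation estimate gives $\|y_\epsilon-T(y_\epsilon)\|=\|P_\epsilon(T(y_\epsilon))-T(y_\epsilon)\|\le\epsilon$.

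Finally I would pass to the limit. Taking $\epsilon=1/k\to0$ and invoking the compactness of $M$, I extract a subsequence $y_{1/k_j}\to y^\ast\in M$. Continuity of $T$ gives $T(y_{1/k_j})\to T(y^\ast)$, while $\|y_{1/k_j}-T(y_{1/k_j})\|\le 1/k_j\to0$; combining these yields $y^\ast=T(y^\ast)$, a fixed point of $T$ in $M$. The main obstacle is the construction of the Schauder projection together with the clean verification of its approximation property and the fact that $T_\epsilon$ is a genuine self-map of the finite-dimensional simplex $M_\epsilon$; once Brouwer's theorem is granted, the remaining net-extraction and limiting steps are routine consequences of compactness and continuity.
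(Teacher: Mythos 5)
Your argument is correct and complete: it is precisely the standard proof of Schauder's fixed point theorem via finite $\epsilon$-nets, the Schauder projection onto $\operatorname{conv}\{x_1,\dots,x_n\}$, Brouwer's theorem in finite dimensions, and a compactness limit --- essentially the same proof given in the cited reference (Gilbarg--Trudinger, Theorem~11.1). The paper itself states this proposition without proof, simply quoting that reference, so there is nothing in the paper to diverge from; your write-up fills in the omitted argument faithfully.
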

To apply the Schauder fixed point theorem, we first define the mapping $T$. Note that for any fixed $y_1\in\RR$, the boundary value problem \eqref{transformed boundary value problem} can be rewritten as the integral equation
\begin{equation}\label{integral formula}
	\rho(y_2) = \int_0^1 s^2(y_1)G(\xi, y_2)\hat{f}(\rho(\xi))\,d\xi + \mfc y_2,
\end{equation}
where $G(\xi, y_2)$ is defined by
\begin{equation}\label{explicit form of Green's kernel}
	G(\xi, y_2) := 
	\begin{cases}
		\xi(y_2-1),\quad\text{if}\ 0\leq \xi \leq y_2,\\
		y_2(\xi-1),\quad\text{if}\ y_2\leq \xi \leq 1.
	\end{cases}
\end{equation}

Let $X := C^2([0,1])$, and define 
\begin{equation*}
	\begin{aligned}
		M := \bigg\{\rho\in C^2[0,1]: & \rho(0) = 0, \rho(1) = \mfc > 0, 
		[\rho'']_{0,1; [0,1]} \leq C(\hat{f}, \mfc)\\
		&\|\rho\|_{C^2[0,1]}\leq\frac{13s^2(y_1)}{8}\underset{\RR}{\sup}|\hat{f}| + 2\mfc\bigg\},
	\end{aligned}
\end{equation*}
where the Lipschitz seminorm $[g]_{0,1; I}$ for a function \(g\) on the interval $I$ and the constant $C(\hat{f}, \mfc)$ are defined by 
\[
[g]_{0,1; I}=\sup_{y_2, \tilde{y}_2\in I}\frac{|g(y_2)-g(\tilde{y}_2)|}{|y_2-\tilde{y}_2|}
\]
and 
\[
C(\hat{f},\mfc) := s^2(y_1)[\hat{f}]_{0,1; \RR} \left(\frac{13s^2(y_1)}{8}\|\hat{f}\|_{C(\RR)} + 2\mfc\right).
\]
 For any $\rho\in X$, define
\begin{equation}\label{definition of T} 
	(T(\rho))(y_2) := \int_0^1 s^2(y_1)G(\xi, y_2)\hat{f}(\rho(\xi))\,d\xi + \mfc y_2.
\end{equation}
It is easy to check that the set $M$ is a convex set in $X$. In addition, the requirements on the Lipschitz continuity of $\rho''$ guarantees that $M$ is  a compact subset of $C^2[0,1]$ by the Arzel\`{a}-Ascoli theorem. Now we are ready to prove the following proposition.
\begin{proposition}[Existence of solutions for fixed $y_1$]\label{existence of solution}
	Let $\hat{f}:\mathbb{R}\to\mathbb{R}$ be a bounded Lipschitz function. Then the boundary value problem
	\begin{equation}\label{two point boundary value problem for fixed x_1}
		\begin{cases}
			\rho'' = s^2(y_1)\hat{f}(\rho)\quad\text{in}\ [0,1],\\
			\rho(0) = 0,\quad\rho(1) = \mfc > 0
		\end{cases}
	\end{equation}
	has a solution $\rho\in M$.
\end{proposition}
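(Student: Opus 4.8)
**The plan is to apply the Schauder fixed point theorem (Proposition~\ref{Schauder's fixed point theorem}) to the map $T$ defined in \eqref{definition of T} on the set $M$.** Since $M$ has already been verified to be a compact convex subset of $X=C^2[0,1]$, the remaining tasks are: (a) to show $T$ maps $M$ into itself, and (b) to show $T$ is continuous on $M$. Once these are established, Schauder's theorem yields a fixed point $\rho\in M$, and by construction of $G$ as the Green's function for $\partial_{y_2y_2}$ on $[0,1]$ with Dirichlet data, this fixed point solves the integral equation \eqref{integral formula}, hence the boundary value problem \eqref{two point boundary value problem for fixed x_1}.

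First I would record the boundary conditions and the differential structure built into $T$. Evaluating \eqref{definition of T} at $y_2=0$ and $y_2=1$, the kernel $G(\xi,0)=G(\xi,1)=0$ for all $\xi$, so $(T\rho)(0)=0$ and $(T\rho)(1)=\mfc$ automatically. Differentiating \eqref{definition of T} twice in $y_2$, the explicit form \eqref{explicit form of Green's kernel} gives $\partial_{y_2y_2}(T\rho)(y_2)=s^2(y_1)\hat f(\rho(y_2))$, which both confirms that a fixed point solves \eqref{two point boundary value problem for fixed x_1} and shows $T\rho\in C^2[0,1]$ whenever $\rho$ is continuous.

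The heart of (a) is the two quantitative bounds in the definition of $M$: the $C^2$-norm bound and the Lipschitz seminorm bound on the second derivative. For the $C^2$ bound I would estimate $\|T\rho\|_{C^2}$ by controlling $T\rho$, $(T\rho)'$, and $(T\rho)''$ separately. Since $\hat f$ is bounded by $\sup_\RR|\hat f|$, the term $(T\rho)''=s^2(y_1)\hat f(\rho)$ is bounded by $s^2(y_1)\|\hat f\|_{C(\RR)}$; integrating the kernel $G$ and its $y_2$-derivative against $\hat f$ produces the constants $\tfrac{13}{8}$ and the $\mfc$ contributions after adding the affine term $\mfc y_2$, yielding exactly the stated bound $\tfrac{13s^2(y_1)}{8}\sup_\RR|\hat f|+2\mfc$. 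For the Lipschitz seminorm of $(T\rho)''$, I would use $[(T\rho)'']_{0,1;[0,1]}=s^2(y_1)[\hat f\circ\rho]_{0,1;[0,1]}\le s^2(y_1)[\hat f]_{0,1;\RR}[\rho]_{0,1;[0,1]}$ and then bound $[\rho]_{0,1;[0,1]}\le\|\rho\|_{C^2}$ using that $\rho\in M$; this reproduces precisely the constant $C(\hat f,\mfc)$. The continuity statement (b) follows from dominated convergence: if $\rho_n\to\rho$ in $C^2[0,1]$ then $\hat f(\rho_n(\xi))\to\hat f(\rho(\xi))$ pointwise (Lipschitz continuity of $\hat f$), the integrand is uniformly bounded, and differentiating under the integral transfers convergence to all the derivatives appearing in the $C^2$-norm.

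The main obstacle will be verifying the self-mapping property (a) with the exact constants as stated, since the numerical factor $\tfrac{13}{8}$ and the coefficient $2$ on $\mfc$ must come out of careful integral estimates of $G$ and $\partial_{y_2}G$; this is bookkeeping rather than conceptual, but it is where an error would most easily creep in. I should also make sure that the bounds, which depend on the fixed value $s^2(y_1)$, are uniform in the relevant range of $y_1$ if this proposition is later to be applied simultaneously across all $y_1$—though for the statement as given, $y_1$ is fixed and $s^2(y_1)$ is simply a constant.
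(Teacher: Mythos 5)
Your proposal is correct and follows essentially the same route as the paper: verify that $T$ preserves the boundary values, establish the $C^2$-norm bound and the Lipschitz bound on $(T\rho)''$ so that $T$ maps $M$ into itself, check continuity, and invoke the Schauder fixed point theorem. The only cosmetic difference is that the paper proves continuity of $T$ via an explicit Lipschitz estimate $\|T(\rho)-T(\varrho)\|_{C^2[0,1]} \leq \tfrac{13 s^2(y_1)[\hat f]_{0,1;\RR}}{8}\|\rho-\varrho\|_{C^2[0,1]}$ rather than by dominated convergence, but both suffice.
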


\begin{proof}
The key issue is to show that the map $T$ defined in \eqref{definition of T} is a continuous map from $M$ into itself.
Indeed, it follows  from \eqref{explicit form of Green's kernel} and \eqref{definition of T} that 
	\[
	T(\rho)(0) = 0,\quad \quad T(\rho)(1) = \mfc,
	\]
	and 
\[	\|T(\rho)\|_{C[0,1]} \leq \frac{s^2(y_1)}{8}\|\hat{f}\|_{C(\RR)} + \mfc.
\]
	Furthermore, direct computations yield
	\begin{equation*}
		(T(\rho))'(y_2) = \ii{0}{y_2}\xi s^2(y_1)\hat{f}(\rho(\xi))\,d\xi +\ii{y_2}{1}(\xi-1)s^2(y_1)\hat{f}(\rho(\xi))\,d\xi + \mfc
	\end{equation*}
	and
	\begin{equation}\label{2ndderT}
		(T(\rho))''(y_2) = s^2(y_1)\hat{f}(\rho(y_2)).
	\end{equation}
	Hence one has
	\[
		\|(T(\rho))'\|_{C[0,1]} \leq \frac{s^2(y_1)}{2}\|\hat{f}\|_{C(\RR)} + \mfc
	\]
	and 
	\begin{equation*}
		\begin{aligned}		
			\|(T(\rho))''\|_{C[0,1]} &\leq s^2(y_1)\|\hat{f}\|_{C(\RR)}.
		\end{aligned}
	\end{equation*}
	Consequently, one has
	\begin{equation*}
		||T(\rho)||_{C^2[0,1]} \leq \frac{13s^2(y_1)}{8}||\hat{f}||_{C(\RR)} + 2\mfc.
	\end{equation*}
Furthermore, it follows from \eqref{2ndderT} that  for any $y_2$, $\Tilde{y}_2 \in [0,1]$, one has
	\begin{equation}\label{Tphi''Lip}
		\begin{aligned}
			&\quad |T(\rho)''(\Tilde{y}_2) - T(\rho)''(y_2)| = s^2(y_1)|\hat{f}(\rho(\Tilde{y}_2)) - \hat{f}(\rho(y_2))|\\
			&\leq s^2(y_1)[\hat{f}]_{0,1; \RR}|\rho(\Tilde{y}_2) - \rho(y_2)| 
			\leq s^2(y_1)[\hat{f}]_{0,1; \RR} [\rho]_{0,1; [0,1]} |\Tilde{y}_2-y_2|\\
			&\leq s^2(y_1)[\hat{f}]_{0,1; \RR} \left(\frac{13s^2(y_1)}{8}\|\hat{f}\|_{C(\RR)} + 2\mfc\right)|\Tilde{y}_2 - y_2|.
		\end{aligned}
	\end{equation}
	In addition, the mapping $T$ is indeed a Lipschitz mapping, namely
	\[\|T(\rho)-T(\varrho)\|_{C^2[0,1]} \leq \frac{13s^2(y_1)[\hat{f}]_{0,1; \RR}}{8}\|\rho-\varrho\|_{C^2[0,1]}.\]
Therefore, $T$ is a continuous mapping from $M$ into itself. It follows from the Schauder fixed point theorem (cf. Proposition~\ref{Schauder's fixed point theorem}) that there exists a fixed point for \eqref{definition of T}. Hence there exists a solution $\rho \in M$ to the two-point boundary value problem \eqref{two point boundary value problem for fixed x_1}. 
\end{proof}

Next, we prove that the solution is unique and has positive horizontal velocity, namely $v_1 = \frac{1}{s_1-s_0}\pd{\phi}{y_2}>0$.

\begin{proposition}\label{Theorem: Monotonicity result in x_2 by sign condition}
Let $\hat{f}$ be of form \eqref{extension: Lipschitz extension of f to F} and satisfy condition \eqref{assumption: sign condition of f on the boundary}.	Assume that there exists a classical solution to the boundary value problem \eqref{transformed boundary value problem}. Then $\phi$ is unique, and satisfies
\[
 0 \leq \phi \leq \mfc\quad  \text{and}\quad  \pd{\phi}{y_2} > 0\,\, \text{in}\,\, \RR\times[0,1].
 \]
\end{proposition}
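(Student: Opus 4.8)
The plan is to prove the two claims---the bounds $0\le\phi\le\mfc$ and the strict monotonicity $\partial_{y_2}\phi>0$---together with uniqueness, exploiting the fact that for each frozen $y_1\in\RR$ the problem \eqref{transformed boundary value problem} is exactly the two-point boundary value problem covered by Lemma~\ref{Monotonicity of ODE}. First I would fix an arbitrary $y_1\in\RR$ and regard $\phi^{y_1}(y_2):=\phi(y_1,y_2)$ as a solution of the ODE $(\phi^{y_1})''=s^2(y_1)\hat f(\phi^{y_1})$ with $\phi^{y_1}(0)=0$, $\phi^{y_1}(1)=\mfc$. Note that $s^2(y_1)\hat f(\cdot)$ is a fixed bounded Lipschitz function of its argument (for this frozen $y_1$), so the hypotheses of Lemma~\ref{Monotonicity of ODE} are in reach once the correct range $0<\phi^{y_1}<\mfc$ is established.

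The main obstacle, and the heart of the argument, is establishing the bounds $0\le\phi\le\mfc$, because Lemma~\ref{Monotonicity of ODE} presupposes that the solution lies strictly between its boundary values. I would derive these from the sign condition \eqref{assumption: sign condition of f on the boundary}, namely $f(0)\le0\le f(\mfc)$, via a maximum-principle / barrier argument on the ODE. Concretely, to show $\phi^{y_1}\le\mfc$, suppose the maximum of $\phi^{y_1}$ on $[0,1]$ exceeds $\mfc$; since the boundary values are $0$ and $\mfc$, the maximum is attained at an interior point $y_2^\ast\in(0,1)$ where $\phi^{y_1}(y_2^\ast)>\mfc$, so $(\phi^{y_1})''(y_2^\ast)\le0$. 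But by the extension \eqref{extension: Lipschitz extension of f to F}, $\hat f(\phi^{y_1}(y_2^\ast))=f(\mfc)\ge0$, whence $(\phi^{y_1})''(y_2^\ast)=s^2(y_1)f(\mfc)\ge0$; combined with a standard interior-maximum perturbation (e.g.\ comparing with $\phi^{y_1}(y_2)-\delta y_2^2$ or invoking that a nonnegative-Laplacian function cannot have a strict interior max) this forces a contradiction and yields $\phi^{y_1}\le\mfc$ throughout. The lower bound $\phi^{y_1}\ge0$ is symmetric, using $\hat f(\phi)=f(0)\le0$ for $\phi\le0$ so that $(\phi^{y_1})''\le0$ at an interior negative minimum, again contradicting the minimum principle. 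Thus $0\le\phi^{y_1}\le\mfc$ on $[0,1]$, and on $(0,1)$ the inequalities are strict since equality to a boundary value at an interior point would, by the same sign analysis, be incompatible with the ODE.

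Once $0<\phi^{y_1}<\mfc$ in $(0,1)$ is secured, I would apply Lemma~\ref{Monotonicity of ODE} with $\eta_0=0$, $\eta_1=\mfc$, and nonlinearity $s^2(y_1)\hat f$ to conclude that $\phi^{y_1}$ is strictly increasing and is the \emph{unique} such solution, for each fixed $y_1$. Strict monotonicity gives $(\phi^{y_1})'>0$ on $[0,1]$; I would record that this is genuinely strict up to and including the endpoints because a vanishing derivative at a boundary point together with the boundary value would, by ODE uniqueness for the initial value problem, force $\phi^{y_1}$ to be constant, contradicting $\phi^{y_1}(0)\ne\phi^{y_1}(1)$. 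Reassembling over all $y_1$, this yields $\partial_{y_2}\phi>0$ on $\RR\times[0,1]$.

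Finally, uniqueness of $\phi$ as a function on $\RR\times[0,1]$ follows immediately: any classical solution of \eqref{transformed boundary value problem} restricts, for each fixed $y_1$, to a solution of the frozen two-point problem satisfying the same bounds by the maximum-principle argument above, and Lemma~\ref{Monotonicity of ODE} asserts that problem has a single solution. Hence two classical solutions must agree at every $(y_1,y_2)$, proving uniqueness. The only delicate point to state carefully is that the bounds must be proved \emph{before} invoking Lemma~\ref{Monotonicity of ODE}, since the lemma's hypothesis $\eta_0<\eta<\eta_1$ is precisely what the sign condition \eqref{assumption: sign condition of f on the boundary} is designed to supply; I would therefore present the barrier/maximum-principle estimate as the self-contained first step and treat monotonicity and uniqueness as formal consequences.
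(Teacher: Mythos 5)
Your treatment of the $L^\infty$ bounds and of uniqueness is sound and close in spirit to the paper's Step 1: you rule out an interior maximum above $\mfc$ (resp.\ minimum below $0$) by combining the sign of $\hat f$ forced by \eqref{extension: Lipschitz extension of f to F} and \eqref{assumption: sign condition of f on the boundary} with a convexity argument on the superlevel set, where the paper instead observes that the critical value must satisfy $\hat f(C)=0$ and then invokes uniqueness of the initial value problem to force the solution to be constant; both routes work, and uniqueness then follows from Lemma~\ref{Monotonicity of ODE} exactly as in the paper. (One small caution: the perturbation $\phi^{y_1}(y_2)-\delta y_2^2$ pushes the second derivative in the wrong direction; the correct comparison on a component $(a,b)$ of $\{\phi^{y_1}>\mfc\}$ is simply that $(\phi^{y_1})''=s^2(y_1)f(\mfc)\geq 0$ there makes $\phi^{y_1}$ convex, hence bounded by its endpoint values, which are at most $\mfc$.)

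The genuine gap is in the derivative bound $\pd{\phi}{y_2}>0$. You write ``strict monotonicity gives $(\phi^{y_1})'>0$ on $[0,1]$,'' but a strictly increasing $C^2$ function can perfectly well have vanishing derivative at interior points (think of $y_2^3$ near the origin), so Lemma~\ref{Monotonicity of ODE} does \emph{not} deliver this, and your proposal never addresses interior critical points at all. The paper spends its entire Step 2 on exactly this: for $h\in(0,\tfrac12]$ it reflects, sets $W(y_2)=\phi^{y_1}(2h-y_2)-\phi^{y_1}(y_2)$, notes $W>0$ on $[0,h)$ and $W(h)=0$ with $W''+s^2(y_1)gW=0$ for a bounded $g$, and applies Hopf's lemma to get $W'(h)=-2(\phi^{y_1})'(h)<0$. (An alternative fix: if $(\phi^{y_1})'(y_2^0)=0$ at an interior point, the autonomy of the ODE and uniqueness for the IVP force $\phi^{y_1}$ to be symmetric about $y_2^0$, contradicting strict monotonicity.) Your endpoint argument is also flawed as stated: from $\phi^{y_1}(0)=0$ and $(\phi^{y_1})'(0)=0$, uniqueness for the IVP forces $\phi^{y_1}\equiv 0$ only when the constant $0$ actually solves the ODE, i.e.\ only when $f(0)=0$; if $f(0)<0$ the constant is not a solution and no contradiction with constancy arises (instead one must argue via local concavity, or via the two-case Hopf-lemma argument of the paper's Step 3, which treats $f(\mfc)>0$ and $f(\mfc)=0$ separately). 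Since the strict positivity of $\pd{\phi}{y_2}$ up to the boundary is precisely what makes the Euler--Lagrange transformation invertible later in the paper, this part of the proof needs to be supplied rather than asserted.
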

\begin{proof} We divide the proof into three steps.
	
	{\it Step 1. $L^\infty$-estimate.}
	We first show that for any fixed $y_1 \in \RR$, the solution $\ph^{y_1}(y_2):=\phi(y_1, y_2)$ of the boundary value problem \eqref{transformed boundary value problem} actually satisfies the boundedness condition
	\begin{equation}\label{boundedness condition for phi on fixed x1}
		0<\ph^{y_1}<\mfc 	\quad \text{in } (0,1).
	\end{equation}
If  \eqref{boundedness condition for phi on fixed x1} is not true, then one of the following two cases must happen.
	\begin{itemize}
		\item [(i)] the function $\phi^{y_1}$ attains maximum at $\Tilde{y}_2\in (0,1)$ with $\phi^{y_1}(\Tilde{y}_2) = C \geq \mfc$;
		\item [(ii)] the function $\phi^{y_1}$ attains minimum at $\Tilde{y}_2\in (0,1)$ with $\phi^{y_1}(\Tilde{y}_2) = \mu \leq  0$;
	\end{itemize}
	We need to rule out the both cases.
	
	Seeking for a contradiction, we suppose that Case (i) happens, i.e., there exist a point $\Tilde{y}_2\in(0,1)$ and $C\geq \mfc$ such that
	\[\ph^{y_1}(\Tilde{y}_2) =\max_{[0,1]}\ph^{y_1}({y}_2) = C.
	\]
	One has
	\[
	\frac{d}{dy_2}\phi^{y_1}(\tilde{y}_2)=0\quad\text{and}\quad  	\frac{d^2}{d y_2^2}\phi^{y_1}(\tilde{y}_2)\leq 0.
	\]
It follows from ${\eqref{transformed boundary value problem}}_1$ that one has 
\[
\hat{f}(C) =\frac{1}{s^2(y_1)}\frac{d^2}{dy_2^2}\phi^{y_1}(\tilde{y}_2)\leq 0.
\]
 This, together with \eqref{assumption: sign condition of f on the boundary} and \eqref{extension: Lipschitz extension of f to F}, implies  $\hat{f}(C)=f(\mfc)=0$.

	On the other hand, $\phi^{y_1}(y_2)$ satisfies the following initial value problem
	\begin{equation*}
		\begin{cases}
			(\phi^{y_1})'' = s^2(y_1)\hat{f}(\phi^{y_1})\\
			\phi^{y_1}(\Tilde{y}_2) = C,\quad (\phi^{y_1})'(\Tilde{y}_2) = 0,
		\end{cases}
	\end{equation*}
	which has a unique solution $\ph^{y_1} \equiv C$ in $[0,1]$ since $\hat{f}$ is a bounded Lipschitz function. This implies that $\ph^{y_1}(0) = C>0$, which contradicts the given boundary condition $\ph^{y_1}(0) = 0$. Hence, case (i) is impossible.

Similarly, we can show that Case (ii) cannot happen, either. 
 Therefore, $\ph^{y_1}$ can never reaches its maximum and minimum in the interior of $(0,1)$, so \eqref{boundedness condition for phi on fixed x1} must hold.
	
	In addition, it follows from Lemma~\ref{Monotonicity of ODE} that $\phi^{y_1}$ is unique, and so is $\phi$.  We can also verify that $\phi$ is indeed the unique solution to \eqref{transformed boundary value problem without extension}, provided that $f(0)\leq 0 \leq f(\mfc)$.
	
	{\it Step 2. Away from stagnation inside the domain.}
	It follows from Lemma~\ref{Monotonicity of ODE} that $\ph^{y_1}$ is strictly increasing in $y_2$. For any $h\in(0,\frac{1}{2}]$, define 
	\begin{equation*}
		W(y_2) := \ph^{y_1}(2h-y_2) - \ph^{y_1}(y_2)\quad \text{for any } y_2\in [0,h].
	\end{equation*}
Clearly, $W(h) = 0$. Furthermore, since $\ph^{y_1}$ is strictly increasing, one also has $W > 0$ in $[0,h)$. Indeed, $W$ also satisfies
	\[W'' + s^2(y_1)g(y_2) W = 0,\]
	where
	\[g(y_2) := \frac{f(\ph^{y_1}(2h-y_2)) - f(\ph^{y_1}(y_2))}{\ph^{y_1}(2h-y_2) - \ph^{y_1}(y_2)}.\]
	Then by the Hopf's lemma, we conclude that $0>W'(h) = -2(\ph^{y_1})'(h)$, or equivalently
	\[(\ph^{y_1})'(h) > 0\]
	for all $h\in(0,\frac{1}{2}]$. To prove that $(\ph^{y_1})' > 0$ in $[\frac{1}{2},1)$, we only need to apply the above argument to the function $\mfc - \ph^{y_1}(1-y_2)$ instead. Since $y_1$ is arbitrary, the above assertion also implies $(\ph^{y_1})' > 0$ in $\mathbb{R}\times(0,1)$. 
	
	{\it Step 3. Away from the stagnation on the boundary.}   Now we are ready to prove the strict inequality holds up to the end points $y_2 = 0$ or $1$. Let us consider the case $y_2 = 1$ and recall that $\phi^{y_1}(1) = \mfc$. If $f(\mfc) > 0$, then there exists a small neighborhood of $y_2 = 1$ in $(0,1)$ such that
	\[(\phi^{y_1})'' = f(\phi^{y_1}) >0.\]
	In addition, we know that $\phi^{y_1} < \mfc$ in this neighbourhood. Hence, we can conclude that $(\phi^{y_1})'(1) > 0$ by the Hopf lemma. On the other hand, if $f(\mfc) = 0$, then $\phi^{y_1}$ satisfies
	\[(\phi^{y_1}-\mfc)''+a(y_1)(\phi^{y_1}-\mfc) = 0,\]
	where
	\[a(y_1) := \frac{f(\phi^{y_1})-f(\mfc)}{\phi^{y_1}-\mfc}\]
	is a bounded function. 
	Then it follows from the Hopf lemma that
	\[(\phi^{y_1})'(1) > 0.\]
	Similarly, one can also show that $(\phi^{y_1})' > 0$ at $y_2 = 0$.
\end{proof}

It follows from Proposition~\ref{Theorem: Monotonicity result in x_2 by sign condition} that we have the following corollary.
\begin{corollary}
	 The unique solution to \eqref{transformed boundary value problem} is also the unique solution to \eqref{transformed boundary value problem without extension}, provided that $f(0) \leq 0 \leq f(\mfc)$.
\end{corollary}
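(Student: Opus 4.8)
The plan is to leverage the $L^\infty$-estimate and the uniqueness already established in Proposition~\ref{Theorem: Monotonicity result in x_2 by sign condition}, together with the observation that the extension $\hat f$ and the original nonlinearity $f$ coincide exactly on the range $[0,\mfc]$ where the solution is confined. Concretely, I would first recall that under the hypothesis $f(0)\le 0\le f(\mfc)$, Proposition~\ref{Theorem: Monotonicity result in x_2 by sign condition} produces a unique classical solution $\phi$ of the extended problem \eqref{transformed boundary value problem} which, by Step~1 of its proof, obeys the pointwise bound $0\le\phi\le\mfc$ on $\RR\times[0,1]$ (indeed $0<\phi<\mfc$ in the open strip). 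This range confinement is the crucial input.

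The existence half is then immediate by comparing the two nonlinearities. By the definition \eqref{extension: Lipschitz extension of f to F}, one has $\hat f(\zeta)=f(\zeta)$ for every $\zeta\in[0,\mfc]$; since $\phi(y)\in[0,\mfc]$ for all $y$, it follows that $\hat f(\phi(y))=f(\phi(y))$ pointwise. Hence the equation $\pd{\phi}{y_2 y_2}=s^2(y_1)\hat f(\phi)$ reads exactly $\pd{\phi}{y_2 y_2}=s^2(y_1)f(\phi)$, and the boundary data in \eqref{transformed boundary value problem} and \eqref{transformed boundary value problem without extension} are identical, so $\phi$ is a solution of \eqref{transformed boundary value problem without extension}.

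For the uniqueness half, I would argue that every solution of the unextended problem is automatically a solution of the extended one. Since $f$ is only defined on $[0,\mfc]$, any function $\psi$ for which $f(\psi)$ is meaningful must take values in $[0,\mfc]$; on that range $f=\hat f$, so $\psi$ solves \eqref{transformed boundary value problem} as well. The uniqueness assertion of Proposition~\ref{Theorem: Monotonicity result in x_2 by sign condition}, which holds fiberwise in $y_1$ via Lemma~\ref{Monotonicity of ODE}, then forces $\psi=\phi$. Combining the two halves gives the claimed equivalence.

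The only delicate point — and the nearest thing to an obstacle — is the implicit requirement that a solution of \eqref{transformed boundary value problem without extension} take values in $[0,\mfc]$. This is not an extra hypothesis but is forced by the very domain of definition of $f$, so the matching argument above is self-contained. If instead one wished to admit candidate solutions valued outside $[0,\mfc]$ under some arbitrary continuation of $f$, one would first re-run the fiberwise maximum-principle argument of Step~1 of Proposition~\ref{Theorem: Monotonicity result in x_2 by sign condition} to confine the range to $[0,\mfc]$, after which the comparison $\hat f=f$ applies verbatim.
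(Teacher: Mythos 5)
Your proposal is correct and follows essentially the same route the paper intends: Proposition~\ref{Theorem: Monotonicity result in x_2 by sign condition} confines the solution of \eqref{transformed boundary value problem} to the range $[0,\mfc]$, where $\hat f$ and $f$ coincide by \eqref{extension: Lipschitz extension of f to F}, and conversely any solution of \eqref{transformed boundary value problem without extension} necessarily takes values in the domain $[0,\mfc]$ of $f$ and hence solves the extended problem, so uniqueness transfers. Your explicit handling of the "delicate point" about the domain of $f$ is exactly the observation the paper leaves implicit.
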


In addition, since $\varphi^{-}$ defined in \eqref{equation: relation of stream function and horizontal velocity in upstream} is differentiable and $(\varphi^{-})' = v_1^- > 0$,
the function $\kappa$ defined in \eqref{definition: kappa} is differentiable and 
\[\kappa'(\varphi) = \frac{1}{(v_1^-)(\kappa(\varphi))}.\]
The straightforward differentiation yields
\begin{equation}\label{equality: explicit form of f prime}
	f'(\cdot) = (v_1^-)''(\kappa(\cdot))\kappa'(\cdot) = \frac{(v_1^-)''(\kappa(\cdot))}{(v_1^-)(\kappa(\cdot))}.
\end{equation}

After showing the $y_2$-monotonicity of $\phi$, we prove the non-degeneracy of $\pd{\phi}{y_2}$ to the boundary value problem \eqref{transformed boundary value problem}. In contrast to previous results in this section, all the statements and proofs below, especially for the far field behaviors, rely on Assumption~\ref{structural assumption on Omega p}.
\begin{proposition}\label{proposition: positive lower bound of phi prime at far field}
	For any fixed $y_1\in\RR$, let $\phi^{y_1}(y_2)$ be the solution obtained in Proposition \ref{Theorem: Monotonicity result in x_2 by sign condition}. Then there exists a constant $\gamma>0$ such that
	\begin{equation*}
		\liminf_{y_1\to\pm\infty}(\phi^{y_1})' \geq \gamma >0.
	\end{equation*}
\end{proposition}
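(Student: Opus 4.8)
The plan is to control the solution $\phi^{y_1}$ at the two far fields by comparing it with the limiting profiles obtained from the limiting ODEs. Since Assumption~\ref{structural assumption on Omega p} gives $s(y_1)\to 1$ as $y_1\to-\infty$ and $s(y_1)\to\sigma$ as $y_1\to+\infty$, the coefficient $s^2(y_1)$ in \eqref{transformed boundary value problem} converges to $1$ and $\sigma^2$ respectively. First I would consider the two limiting boundary value problems
\begin{equation*}
	(\phi^-)'' = \hat{f}(\phi^-)\quad\text{in }(0,1),\qquad \phi^-(0)=0,\ \phi^-(1)=\mfc,
\end{equation*}
and
\begin{equation*}
	(\phi^\infty)'' = \sigma^2\hat{f}(\phi^\infty)\quad\text{in }(0,1),\qquad \phi^\infty(0)=0,\ \phi^\infty(1)=\mfc.
\end{equation*}
By Proposition~\ref{Theorem: Monotonicity result in x_2 by sign condition} (applied with the constant coefficients $1$ and $\sigma^2$) each of these has a unique solution, which is strictly increasing and valued in $[0,\mfc]$; in particular $(\phi^-)'>0$ and $(\phi^\infty)'>0$ on $[0,1]$, so each admits a strictly positive minimum. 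Setting $\gamma_0:=\tfrac12\min\{\min_{[0,1]}(\phi^-)',\ \min_{[0,1]}(\phi^\infty)'\}>0$ gives the candidate lower bound.

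The core step is a continuity-in-parameter argument showing $\phi^{y_1}\to\phi^-$ (resp.\ $\phi^\infty$) in $C^1([0,1])$ as $y_1\to-\infty$ (resp.\ $+\infty$). I would argue this via the integral representation: from \eqref{integral formula} the solution $\phi^{y_1}$ is the fixed point of the contraction-type map \eqref{definition of T} with coefficient $s^2(y_1)$, and the analogous limiting solutions are fixed points of the same map with coefficient $1$ or $\sigma^2$. Since the uniform bounds in the definition of $M$ and the Lipschitz estimate in the proof of Proposition~\ref{existence of solution} depend on $s^2(y_1)$ only through $\sup_{\RR}s^2=\bar d^{\,2}$, the whole family $\{\phi^{y_1}\}$ lies in a fixed compact subset of $C^2([0,1])$ with a uniform Lipschitz bound on the second derivatives. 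Subtracting the two integral equations and using $|s^2(y_1)-1|\to 0$ (resp.\ $|s^2(y_1)-\sigma^2|\to 0$) together with the Lipschitz continuity of $\hat f$, a Gronwall/contraction estimate yields $\|\phi^{y_1}-\phi^-\|_{C^2[0,1]}\to 0$ as $y_1\to-\infty$ and $\|\phi^{y_1}-\phi^\infty\|_{C^2[0,1]}\to 0$ as $y_1\to+\infty$. Convergence in $C^2$ forces $(\phi^{y_1})'\to(\phi^-)'$ (resp.\ $(\phi^\infty)'$) uniformly, so for all $y_1$ of sufficiently large absolute value one has $(\phi^{y_1})'\geq\gamma_0$ pointwise, which gives the claimed $\liminf$.

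I expect the main obstacle to be making the passage to the limit rigorous uniformly in $y_2$, rather than merely along subsequences, and in particular ensuring the limiting profiles $\phi^-,\phi^\infty$ are the \emph{genuine} limits of $\phi^{y_1}$ and not just accumulation points. The clean way around this is to exploit the uniqueness from Lemma~\ref{Monotonicity of ODE}: any $C^2$ accumulation point of $\{\phi^{y_1}\}$ as $y_1\to-\infty$ must, by stability of the ODE under the coefficient limit $s^2(y_1)\to 1$, solve the limiting problem and satisfy $0\le\phi\le\mfc$, hence equals $\phi^-$ by uniqueness; compactness of $M$ then upgrades subsequential convergence to full convergence. A secondary subtlety is that the naive contraction constant $\tfrac{13\,\bar d^{\,2}}{8}[\hat f]_{0,1;\RR}$ need not be less than $1$, so one cannot simply invoke the Banach fixed point uniqueness to compare solutions; this is precisely why I route the comparison through the uniform compactness in $M$ plus the ODE uniqueness of Lemma~\ref{Monotonicity of ODE}, which holds regardless of the size of the Lipschitz constant.
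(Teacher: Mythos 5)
Your proposal is correct and follows essentially the same route as the paper: uniform compactness of $\{\phi^{y_1}\}$ in $C^2[0,1]$ coming from the bounds defining $M$, identification of any accumulation point as the unique solution of the constant-coefficient limiting problem, and strict positivity of that limiting solution's derivative via Proposition~\ref{Theorem: Monotonicity result in x_2 by sign condition}. The only difference is presentational — the paper runs these same ingredients as a contradiction argument along a sequence with $\inf_{[0,1]}(\phi^{y_1^m})'\to 0$ rather than proving full convergence of $\phi^{y_1}$ to the limiting profile — and your observation that a naive contraction estimate fails (so one must route through compactness plus the ODE uniqueness) is consistent with what the paper actually does.
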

\begin{proof}
	It follows from Proposition~\ref{Theorem: Monotonicity result in x_2 by sign condition} that for any $y_1\in \RR$,
	\[(\phi^{y_1})'(y_2) > 0\quad \text{for all}\,\, y_2\in [0,1].\]
Since the analysis for flows in the upstream and downstream is the same,	without lost of generality, it suffices to show that there exists a constant $\gamma_->0$ such that
	\begin{equation}\label{4.14.5}
		\liminf_{y_1\to-\infty}(\phi^{y_1})' \geq \gamma_-.
	\end{equation}
Seeking for a contradiction, we assume that \eqref{4.14.5} is not true. Then 
	\begin{equation}\label{Assumption for lim inf vanishing}
		\lim_{y_1\to-\infty}\inff{[0,1]}(\phi^{y_1})'(y_2) = 0.
	\end{equation}
	Hence there exists a sequence $\{y_1^m\}_{m=1}^\infty$, which tends to $-\infty$ as $m\to\infty$, such that
	\begin{equation}\label{limphin0}
	\inff{[0,1]}(\phi^{y_1^m})'(y_2) \to 0\quad\text{as}\ m\to\infty.
\end{equation}
	By Proposition~\ref{existence of solution} and the Arzel\`{a}-Ascoli theorem,  there exists a subsequence (still labelled by $\{\phi^{y_1^m}\}_{m=1}^\infty$) which converges, in $C^2[0,1]$, to some function $\phi^{-\infty}$. We claim that  $\phi^{-\infty}$ satisfies
	\begin{equation}\label{problem: ODE for the uniform limit of phi}
		\begin{cases}
			(\phi^{-\infty})'' = f(\phi^{-\infty})\quad\text{in}\ (0,1)\\
			\phi^{-\infty}(0) = 0,\quad\phi^{-\infty}(1) = \mfc > 0.
		\end{cases}
	\end{equation}

	To prove the claim, it is worth noting that $\phi^{-\infty}(y_2)$ can be rewritten as
	\begin{equation*}
		\begin{aligned}
			\phi^{-\infty}(y_2) = & \lim_{m\to\infty}\ph^{y_1^m}(y_2)\\ 
			= &\lim_{m\to\infty}\int_{0}^{1}\frac{1}{2}(|y_2 - \zeta| - y_2 - \zeta + 2y_2 \zeta)s^2(y_1^m)f(\ph^{y_1^m}(\zeta))\,d\zeta + \mfc y_2.
		\end{aligned}
	\end{equation*}
	Furthermore, it follows from Assumption~\ref{structural assumption on Omega p} and the Lipschitz continuity of $f$ that 
	\[I_{y_1^m}(\zeta)=\frac{1}{2}(|y_2 - \zeta| - y_2 - \zeta + 2y_2 \zeta)s^2(y_1^m)f(\ph^{y_1^m}(\zeta))\]
	converges uniformly to
	\[I_{-\infty}(\zeta):=\frac{1}{2}(|y_2 - \zeta| - y_2 - \zeta + 2y_2 \zeta)f(\ph^{-\infty}(\zeta)),\]
	as $m\to\infty$.
	Moreover, for any $m\in \mathbb{N}$, it follows from Proposition~\ref{Theorem: Monotonicity result in x_2 by sign condition} that $0\leq\phi^{y_1^m}\leq\mfc$, so
	\begin{equation*}
		\begin{aligned}
			|I_{y_1^m}(\zeta)|\leq\frac{1}{4} (\ssu{\RR}s^2)(\ssu{[0,\mfc]}|f|) < \infty.
		\end{aligned}
	\end{equation*} 
	Hence, by the bounded convergence theorem, one has
	\[\phi^{-\infty}(y_2) = \int_{0}^{1}\frac{1}{2}(|y_2 - \zeta| - y_2 - \zeta + 2y_2 \zeta)f(\ph^{-\infty}(\zeta))\,d\zeta + \mfc y_2.\]
	This implies that $\phi^{-\infty}$ is a classical solution to \eqref{problem: ODE for the uniform limit of phi} as desired.
	Now, it follows from Proposition~\ref{Theorem: Monotonicity result in x_2 by sign condition} again that
	\[(\phi^{-\infty})' > 0\quad\text{in}\ [0,1].\]
	Hence there exists a constant  $\gamma_->0$ such that  
	\begin{equation*}
		\lim_{y_1\to-\infty}(\phi^{y_1})' \geq \gamma_- >0,
	\end{equation*}
	which contradicts \eqref{limphin0}.

	By the same argument, one can also prove that there exists a constant $\gamma_+>0$ such that
	\begin{equation*}
		\lim_{y_1\to\infty}(\phi^{y_1})' \geq \gamma_+ >0.
	\end{equation*}
	Define $\gamma:=\min\{\gamma_-, \gamma_+\}>0$. This finishes the proof of the proposition.
\end{proof}

\begin{corollary}\label{corollary: uniform positive lower bound for phi x2}
	Let $\phi$ be the solution to \eqref{transformed boundary value problem without extension}. Assume that $\pd{\phi}{y_2}$ is continuous in $\Om$, then there exists a constant $\bar\gamma>0$ such that
	\[\pd{\phi}{y_2} \geq \bar\gamma \quad\text{in}\ \Om.\]
\end{corollary}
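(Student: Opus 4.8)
The plan is to upgrade the pointwise positivity $\pd{\phi}{y_2}>0$ on the whole strip, furnished by Proposition~\ref{Theorem: Monotonicity result in x_2 by sign condition}, to a \emph{uniform} positive lower bound, and the only obstruction to this is the non-compactness of $\Omega_0=\RR\times(0,1)$ in the $y_1$-direction. I would remove that obstruction using the far-field estimate of Proposition~\ref{proposition: positive lower bound of phi prime at far field}, which controls $\pd{\phi}{y_2}$ as $y_1\to\pm\infty$, and then dispatch the remaining bounded portion of the strip by a routine compactness argument.

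Concretely, I would first invoke Proposition~\ref{proposition: positive lower bound of phi prime at far field} to fix $\gamma>0$ with $\liminf_{y_1\to\pm\infty}\inf_{y_2\in[0,1]}(\phi^{y_1})'(y_2)\geq\gamma$. By the definition of $\liminf$ there is an $N>0$ such that
\[
\inf_{y_2\in[0,1]}(\phi^{y_1})'(y_2)\geq\frac{\gamma}{2}\qquad\text{whenever }|y_1|\geq N,
\]
which is already a uniform positive lower bound on the two unbounded tails $\{|y_1|\geq N\}$. It then remains to treat the compact rectangle $K:=[-N,N]\times[0,1]$. On $K$ the function $\pd{\phi}{y_2}$ is continuous (by the standing hypothesis of the corollary, together with the fact that each slice $\phi^{y_1}\in C^2[0,1]$, so that $\pd{\phi}{y_2}$ extends continuously to $y_2\in\{0,1\}$) and strictly positive everywhere (by Proposition~\ref{Theorem: Monotonicity result in x_2 by sign condition}, which gives $(\phi^{y_1})'>0$ on the whole closed interval $[0,1]$). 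A continuous, strictly positive function on a compact set attains a positive minimum, so there is a constant $\gamma_0>0$ with $\pd{\phi}{y_2}\geq\gamma_0$ on $K$. Setting $\bar\gamma:=\min\{\gamma/2,\gamma_0\}>0$ then yields $\pd{\phi}{y_2}\geq\bar\gamma$ throughout $\Omega_0$.

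The essential point, and the reason this does not follow from Proposition~\ref{Theorem: Monotonicity result in x_2 by sign condition} alone, is precisely the unboundedness of the strip in $y_1$: strict positivity at every individual point is perfectly compatible with $\inf_{\Omega_0}\pd{\phi}{y_2}=0$ being approached along a sequence escaping to $y_1=\pm\infty$. Ruling out this escape is the entire content of the argument, and it is exactly what the far-field Proposition~\ref{proposition: positive lower bound of phi prime at far field} supplies; the compact middle piece is then elementary. The one spot deserving care is the horizontal boundary $y_2\in\{0,1\}$, where one must confirm that both continuity and strict positivity of $\pd{\phi}{y_2}$ persist up to the endpoints --- this is guaranteed by the $C^2[0,1]$-regularity of each slice $\phi^{y_1}$ and by the Hopf-lemma argument already carried out in the proof of Proposition~\ref{Theorem: Monotonicity result in x_2 by sign condition}. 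Alternatively, one could phrase the whole argument contrapositively: a minimizing sequence $(y_1^k,y_2^k)$ with $\pd{\phi}{y_2}(y_1^k,y_2^k)\to 0$ either has $|y_1^k|$ bounded, in which case a limit point contradicts pointwise positivity on $K$, or $|y_1^k|\to\infty$, in which case it contradicts the far-field bound.
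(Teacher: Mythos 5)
Your proof is correct and follows exactly the route the paper intends: the corollary is stated without an explicit proof immediately after Proposition~\ref{proposition: positive lower bound of phi prime at far field}, and the implied argument is precisely your combination of the far-field lower bound on the tails $\{|y_1|\geq N\}$ with the continuity hypothesis and the pointwise strict positivity of $\pd{\phi}{y_2}$ up to $y_2\in\{0,1\}$ (from Proposition~\ref{Theorem: Monotonicity result in x_2 by sign condition}) on the compact middle rectangle. Your explicit care at the horizontal boundary is a welcome, if minor, addition.
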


In order to show that the velocity $v$ induced by the stream function is indeed a classical solution to the hydrostatic Euler equations, we need to study the $y_1$-regularity of the unique solution $\phi$ to \eqref{transformed boundary value problem without extension}, provided that $f(0)\leq 0\leq f(\mfc)$.

We first introduce the Euler-Lagrange transformation from $(y_1, y_2)$ to $(z_1, z_2)$ as follows: 
\begin{equation}\label{definition: E-L transformation}
	(z_1, z_2) := \mathcal{L}(y_1, y_2):= (y_1, \phi(y_1, y_2)).
\end{equation}
It is worth noting that this transformation has an inverse since Proposition~\ref{proposition: positive lower bound of phi prime at far field} implies that the unique solution of \eqref{transformed boundary value problem without extension} satisfies
$
\pd{\phi}{y_2} > 0.
$
Under this change of variables, one has
\begin{equation}\label{identity: function compposite with inverse}
	\phi(z_1, \Phi(z_1,z_2)) = z_2,
\end{equation}
where $\Phi(z_1, z_2)$ is the inverse function of the mapping $z_2\mapsto\phi (z_1,z_2)$ when $z_1$ is fixed. Differentiating \eqref{identity: function compposite with inverse} with respect to $z_2$ yields 
\begin{equation}\label{identity: first derivative of phi and phi inverse}
	\pd{\phi(z_1, \Phi(z_1,z_2))}{y_2}\pd{\Phi(z_1,z_2)}{z_2} = 1,
\end{equation}
which is equivalent to
\begin{equation}\label{dery2phi}
	\pd{\phi(z_1, \Phi(z_1,z_2))}{y_2} = \frac{1}{\pd{\Phi(z_1,z_2)}{z_2}}.
\end{equation}
Furthermore, differentiating \eqref{identity: first derivative of phi and phi inverse} with respect to $z_2$ yields
\begin{equation}\label{identity: second derivative of phi and phi inverse}
	\pd{\phi(z_1, \Phi(z_1,z_2))}{y_2 y_2}(\pd{\Phi(z_1,z_2)}{z_2})^2 + \pd{\phi(z_1, \Phi(z_1,z_2))}{y_2} \pd{\Phi(z_1,z_2)}{z_2z_2} = 0.
\end{equation}
Hence it follows from \eqref{identity: first derivative of phi and phi inverse} and \eqref{identity: second derivative of phi and phi inverse} that
\begin{equation*}
	\pd{\phi(z_1, \Phi(z_1,z_2))}{y_2 y_2} = -\frac{\pd{\Phi(z_1,z_2)}{z_2z_2}}{(\pd{\Phi(z_1,z_2)}{z_2})^3}.
\end{equation*}
Therefore, problem~\eqref{transformed boundary value problem without extension} can be transformed into the following problem:
\begin{equation}\label{problem: degenerate elliptic PDE for phi inverse}
	\begin{cases}
		\pd{\Phi(z_1,z_2)}{z_2z_2} + s^2(z_1)f(z_2)(\pd{\Phi(z_1,z_2)}{z_2})^3 = 0,\\
	\Phi(z_1,0) = 0,\quad\Phi(z_1,\mfc) = 1.
	\end{cases}
\end{equation}
{We note that this boundary value problem has a unique solution due to the uniqueness of solution to problem~\eqref{transformed boundary value problem without extension}.}
We have the following proposition about the regularity of $\Phi$.
\begin{proposition}\label{theorem: regularity of phi inverse}
	Let $\Phi(z_1,z_2)$ be the classical solution of \eqref{problem: degenerate elliptic PDE for phi inverse}, then both $\pd{\Phi(z_1,z_2)}{z_1}$ and $\pd{\Phi(z_1,z_2)}{z_1z_2}$ exist. Furthermore, $\Phi(z_1,z_2)$, $\pd{\Phi(z_1,z_2)}{z_1}$, $\pd{\Phi(z_1,z_2)}{z_2}$ and $\pd{\Phi(z_1,z_2)}{z_1z_2}$ are jointly continuous with respect to both $z_1$ and $z_2$. Finally, it holds that
	\begin{equation}\label{asymptPhi}
		\pd{\Phi(z_1, z_2)}{z_1} \to 0 \quad \text{uniformly as }z_1\to\pm\infty.
	\end{equation}
\end{proposition}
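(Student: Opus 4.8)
The plan is to solve $\eqref{problem: degenerate elliptic PDE for phi inverse}_1$ explicitly by treating $z_1$ as a parameter and noticing that it is a first-order ODE for $u := \pd{\Phi}{z_2}$. Indeed $\eqref{problem: degenerate elliptic PDE for phi inverse}_1$ reads $\partial_{z_2} u = -s^2(z_1) f(z_2)\, u^3$, so $\partial_{z_2}(u^{-2}) = 2 s^2(z_1) f(z_2)$; integrating from $0$ and writing $F(z_2) := \ii{0}{z_2} f(\tau)\,d\tau$ gives the representation
\begin{equation}\label{plan:repn}
	\pd{\Phi(z_1,z_2)}{z_2} = \left[\, A(z_1) + 2 s^2(z_1) F(z_2)\,\right]^{-1/2}, \qquad A(z_1) := \left(\pd{\Phi(z_1,0)}{z_2}\right)^{-2}.
\end{equation}
Integrating once more in $z_2$ and imposing $\Phi(z_1,0)=0$, $\Phi(z_1,\mfc)=1$ pins down $A(z_1)$ through the scalar constraint
\begin{equation}\label{plan:constraint}
	G(A, z_1) := \ii{0}{\mfc}\left[\, A + 2 s^2(z_1) F(\tau)\,\right]^{-1/2}\,d\tau = 1.
\end{equation}
The whole proof then runs off \eqref{plan:repn}--\eqref{plan:constraint}. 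A crucial preliminary is that the bracket stays in a fixed compact subinterval of $(0,\infty)$ uniformly in $z_1$: by \eqref{dery2phi} one has $\pd{\Phi}{z_2} = (\pd{\phi}{y_2})^{-1}$, and Corollary~\ref{corollary: uniform positive lower bound for phi x2} together with the uniform $C^2$-bound on $\phi^{y_1}$ from Proposition~\ref{existence of solution} give $0 < c \le \pd{\Phi}{z_2} \le C$, so $A(z_1) + 2 s^2(z_1) F(\tau) \in [\,\bar\gamma^2, C^2\,]$.

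First I would establish that $A \in C^2(\RR)$. Since $s_0, s_1 \in C^2(\RR)$ force $s^2 \in C^2(\RR)$, the function $G$ in \eqref{plan:constraint} is $C^2$ in $z_1$ and smooth in $A$ on the relevant range, and the uniform two-sided bound above makes $\pd{G}{A} = -\tfrac12\ii{0}{\mfc}[\,\cdot\,]^{-3/2}\,d\tau$ bounded away from $0$. The implicit function theorem then yields $A \in C^2(\RR)$ together with $A'(z_1) = -\,\pd{G}{z_1}\big/\pd{G}{A}$.

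With $A \in C^2$ in hand, the existence, continuity and joint continuity of $\pd{\Phi}{z_1}$ and $\pd{\Phi}{z_1 z_2}$ follow by differentiating the explicit formulas under the integral sign. Differentiating $\Phi(z_1,z_2) = \ii{0}{z_2}[A(z_1)+2s^2(z_1)F(\tau)]^{-1/2}\,d\tau$ and \eqref{plan:repn} in $z_1$ gives
\begin{equation}\label{plan:derivs}
	\pd{\Phi(z_1,z_2)}{z_1} = -\tfrac12\ii{0}{z_2}\left[\,\cdot\,\right]^{-3/2}\!\!\left(A'(z_1) + 2(s^2)'(z_1)F(\tau)\right)d\tau, \quad \pd{\Phi(z_1,z_2)}{z_1z_2} = -\tfrac12\left[\,\cdot\,\right]^{-3/2}\!\!\left(A'(z_1)+2(s^2)'(z_1)F(z_2)\right),
\end{equation}
where $[\,\cdot\,] = A(z_1)+2s^2(z_1)F(z_2)$ in the second expression. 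Every factor is continuous in $(z_1,z_2)$ and the bracket is bounded below, so differentiation under the integral is legitimate and all four functions $\Phi, \pd{\Phi}{z_1}, \pd{\Phi}{z_2}, \pd{\Phi}{z_1z_2}$ are jointly continuous.

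The hard part will be the uniform decay \eqref{asymptPhi}. By \eqref{plan:derivs} it suffices to show $A'(z_1) + 2(s^2)'(z_1)F(\tau) \to 0$ uniformly in $\tau \in [0,\mfc]$ as $z_1 \to \pm\infty$, since the $[\,\cdot\,]^{-3/2}$ factor and $F$ are uniformly bounded. By Remark~\ref{remark: remark on the boundedness of the height of nozzle} (the Barbalat argument), $s_0', s_1' \to 0$, hence $(s^2)'(z_1) = 2 s(z_1) s'(z_1) \to 0$. For $A'$ I would use $A'(z_1) = -\pd{G}{z_1}/\pd{G}{A}$ from the implicit function theorem: the numerator $\pd{G}{z_1} = -(s^2)'(z_1)\ii{0}{\mfc}[\,\cdot\,]^{-3/2}F(\tau)\,d\tau$ carries the factor $(s^2)'(z_1)\to 0$ against a uniformly bounded integral, while the denominator stays bounded away from $0$; thus $A'(z_1)\to 0$. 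Combining these two facts gives $\pd{\Phi}{z_1}\to 0$ uniformly in $z_2$, which is \eqref{asymptPhi}. The delicate point throughout is that the two-sided bound $A(z_1)+2s^2(z_1)F(\tau)\in[\bar\gamma^2,C^2]$ must be genuinely \emph{uniform} in $z_1$; this rests on Corollary~\ref{corollary: uniform positive lower bound for phi x2} and on the uniform upper bound for $\pd{\phi}{y_2}$ coming from membership in $M$ in Proposition~\ref{existence of solution}, so I would verify the uniformity of those bounds carefully before invoking them.
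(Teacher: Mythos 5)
Your proposal is correct and follows essentially the same route as the paper: both integrate the Bernoulli-type ODE for $\pd{\Phi}{z_2}$ explicitly, determine the integration constant ($A(z_1)$, the paper's $\beta(z_1)$) from the condition $\Phi(z_1,\mfc)=1$ via the implicit function theorem, obtain regularity by differentiating the resulting explicit formula, and deduce \eqref{asymptPhi} from $(s^2)'(z_1)\to 0$ together with the uniform two-sided bound on the bracket coming from Corollary~\ref{corollary: uniform positive lower bound for phi x2} and Proposition~\ref{existence of solution}. The only cosmetic difference is that the paper applies the implicit function theorem in the variable $\alpha_1=s^2(z_1)$ to write $\beta(z_1)=\bar\beta(s^2(z_1))$ and then uses the chain rule, whereas you apply it directly with $z_1$ as the parameter; the two are equivalent.
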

\begin{proof}
	We divide the proof into two steps.
	
{\it Step 1. Regularity of $\Phi$.}  Define $\psi:= \pd{\Phi(z_1, z_2)}{z_2}$. Then $\psi$ satisfies
	\begin{equation*}
		\pd{\psi}{z_2} + s^2(z_1)f(z_2)\psi^3 = 0.
	\end{equation*}
Hence $\psi$ can be solved explicitly as follows
	\begin{equation}\label{equation: explicit form of psi square}
		\psi(z_1,z_2)  = \left(2s^2(z_1)\int_0^{z_2}f(\Tilde{\zeta})\,d\Tilde{\zeta} + \beta(z_1)\right)^{-\frac{1}{2}},
	\end{equation}
where $\beta(z_1)$ depends only on $z_1$. It follows from Propositions~\ref{existence of solution} and \ref{proposition: positive lower bound of phi prime at far field} that 
	\[
	0<\frac{1}{\psi}=\pd{\phi}{y_2}\leq \frac{13s^2(y_1)}{8}\|f\|_{C(\RR)} + 2\mfc.
	\]
Using \eqref{dery2phi} yields
	\[2s^2(z_1)\int_0^{z_2}f(\Tilde{\zeta})\,d\Tilde{\zeta} + \beta(z_1) = (\psi(z_1,z_2))^{-2} > 0.\]
	Hence it follows from \eqref{equation: explicit form of psi square}, $\Phi(z_1,0) = 0$ and the definition of $\psi$ that
	\begin{equation*}
	\Phi(z_1,z_2) = \int_0^{z_2}\frac{1}{\sqrt{2s^2(z_1)\int_0^{\zeta} f(\Tilde{\zeta})\,d\Tilde{\zeta} + \beta(z_1)}}\,d\zeta.
	\end{equation*}
	In particular, at $z_2 = \mfc$, one has
	\begin{equation}\label{equation:G(s^2,beta)=1}
		1 = \Phi(z_1,\mfc) = \int_0^{\mfc}\frac{1}{\sqrt{2s^2(z_1)\int_0^{\zeta}f(\Tilde{\zeta})\,d\Tilde{\zeta} + \beta(z_1)}}\,d\zeta =\mathcal{G}(s^2(z_1),\beta(z_1)),
	\end{equation}
	where
	\begin{equation*}
		\mathcal{G}(\alpha_1,\alpha_2):= \int_0^{\mfc}\frac{1}{\sqrt{2\alpha_1\int_0^{\zeta}f(\Tilde{\zeta})\,d\Tilde{\zeta} + \alpha_2}}\,d\zeta.
	\end{equation*}
	Differentiating $\mathcal{G}(\alpha_1,\alpha_2)$ with respect to $\alpha_2$, and then evaluating at $(\alpha_1,\alpha_2)=(s^2(z_1),\beta(z_2))$ yield
	\begin{equation*}
		\pd{\mathcal{G}}{\alpha_2}(s^2(z_1),\beta(z_1)) = -\frac{1}{2}\left.\int_0^\mfc\frac{1}{(2\alpha_1\int_0^{\zeta}f(\Tilde{\zeta})\,d\Tilde{\zeta} + \alpha_2)^\frac{3}{2}}\,d\zeta\right|_{(\alpha_1,\alpha_2)=(s^2(z_1),\beta(z_1))} < 0.
	\end{equation*}
	Hence, applying the implicit function theorem to \eqref{equation:G(s^2,beta)=1} yields that $\beta(z_1) = \Bar{\beta}(s^2(z_1))$ for some  function $\Bar{\beta}(\cdot)\in C^1(\RR)$. Hence one has 
	\begin{equation}\label{explicit formula of phi inverse}
	\Phi(z_1,z_2) = \int_0^{z_2}\frac{1}{\sqrt{2s^2(z_1)\int_0^{\Bar{\zeta}}f(\Tilde{\zeta})\,d\Tilde{\zeta} + \Bar{\beta}(s^2(z_1))}}\,d\Bar{\zeta}.
	\end{equation}
	Since both $s$ and $\Bar{\beta}$ are continuous, we can conclude from \eqref{explicit formula of phi inverse} that $\Phi$ is continuous with respect to both the $z_1$ and $z_2$-coordinate. In addition, equation~\eqref{explicit formula of phi inverse} also implies that $\pd{\Phi}{z_1}$, $\pd{\Phi}{z_2}$, and $\pd{\Phi}{z_1z_2}$ exist and are continuous. 
	
{\it Step 2. Asymptotic behavior of $\partial_{z_1}\Phi$.} As mentioned in Remark~\ref{remark: remark on the boundedness of the height of nozzle}, there exist two constants $\underline{d}$ and $\bar{d}$ with $0<\underline{d}\leq \bar{d}<+\infty$ such that
	\[\underline{d}^2 \leq s^2(z_1) \leq \bar{d}^2.\]
It follows from equation~\eqref{identity: first derivative of phi and phi inverse} and Corollary~\ref{corollary: uniform positive lower bound for phi x2} that
	\[\ssu{\RR\times[0,\mfc]}\left|\pd{\Phi(z_1, z_2)}{z_2}\right| = \ssu{\overline{\Om_0}}\left|\frac{1}{\pd{\phi}{y_2}}\right| \leq \frac{1}{\bar\gamma} < \infty.\]
	Direct calculations yield
	\begin{equation*}
		\begin{aligned}
			&\quad|\pd{\Phi(z_1,z_2)}{z_1}|\\
			&\leq \bigg|\frac{1}{2}\int_0^{z_2}\left(2(s^2)'(z_1)\int_0^{\zeta}f(\Tilde{\zeta})\,d\Tilde{\zeta} + \Bar{\beta}'(s^2(z_1))(s^2)'(z_1))\right)(\pd{\Phi(z_1,z_2)}{z_2})^{3}\,d\zeta\bigg|\\
			&\leq\frac{\mfc}{2}\left(2\mfc~ \ssu{[0,\mfc]}|f|+\ssu{[\underline{d}^2,\bar{d}^2]}|\Bar{\beta}'|\right)\frac{|(s^2)'(z_1)|}{\bar{\gamma}^3}.
		\end{aligned}
	\end{equation*}
 Hence, by Assumption~\ref{structural assumption on Omega p}, we conclude \eqref{asymptPhi}.
	This finishes the proof of the proposition.
\end{proof}

Let $Y$ be defined as
\begin{equation}\label{definition: definition of function space Y}
	Y := \{\phi: \phi, \pd{\phi}{y_1}, \pd{\phi}{y_2}, \pd{\phi}{y_1 y_2},\pd{\phi}{y_2 y_2}\in C(\bbar{\Om_0}), \phi(y_1, 0) = 0, \phi(y_1, 1) = \mfc\}
\end{equation}
equipped with the norm
\begin{equation*}
	||\phi||_{Y} := \ssu{\bbar{\Om_0}}(|\phi| + |\pd{\phi}{y_1}| + |\pd{\phi}{y_2}| + |\pd{\phi}{y_1 y_2}| +|\pd{\phi}{y_2 y_2}|).
\end{equation*}
With the above propositions, we are ready to prove the main result for the boundary value problem \eqref{transformed boundary value problem without extension}.
\begin{proposition}\label{theorem: existence of stream function without sign condition}
	Let the function $f$ satisfy \eqref{assumption: sign condition of f on the boundary}.
	Then the boundary value problem \eqref{transformed boundary value problem without extension} has a unique solution $\phi\in Y$.
	Furthermore, there exists a constant $\hat{\gamma}>0$ such that
	\begin{equation}\label{e:dy2phigeqGamma>0}
		\pd{\phi}{y_2} \geq \hat\gamma > 0 \quad \text{in}\,\, \RR\times[0,1],
	\end{equation}
	and
	\begin{equation}\label{eq4.28.5}
		\pd{\phi}{y_1}\to0 \quad \text{uniformly as}\,\, y_1\to\pm\infty.\end{equation}
\end{proposition}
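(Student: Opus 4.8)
The plan is to assemble the pieces established in Propositions~\ref{existence of solution}, \ref{Theorem: Monotonicity result in x_2 by sign condition}, \ref{proposition: positive lower bound of phi prime at far field} and \ref{theorem: regularity of phi inverse}, so that the only genuinely new work is transferring the $z$-regularity of the Euler--Lagrange inverse $\Phi$ back into $y$-regularity of $\phi$.

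First I would settle existence and uniqueness. Extending $f$ to the bounded Lipschitz function $\hat f$ as in \eqref{extension: Lipschitz extension of f to F}, Proposition~\ref{existence of solution} produces, for each fixed $y_1\in\RR$, a solution $\phi^{y_1}\in C^2[0,1]$ of \eqref{transformed boundary value problem}; Proposition~\ref{Theorem: Monotonicity result in x_2 by sign condition} then yields $0\le\phi^{y_1}\le\mfc$, strict monotonicity $(\phi^{y_1})'>0$, and uniqueness. Since $\hat f=f$ on $[0,\mfc]$, this $\phi(y_1,y_2):=\phi^{y_1}(y_2)$ in fact solves the unextended problem \eqref{transformed boundary value problem without extension}. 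At this stage the values and $y_2$-derivatives of $\phi$ are under control; what remains is its $y_1$-regularity and the joint continuity required for membership in $Y$ (see \eqref{definition: definition of function space Y}).

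Second --- and this is the crux --- I would use the Euler--Lagrange change of variables \eqref{definition: E-L transformation}. Since $\partial_{y_2}\phi>0$, the map $z_2\mapsto\phi(z_1,z_2)$ is invertible with inverse $\Phi$, and Proposition~\ref{theorem: regularity of phi inverse} gives the joint continuity of $\Phi,\partial_{z_1}\Phi,\partial_{z_2}\Phi,\partial_{z_1z_2}\Phi$, while the explicit formula \eqref{equation: explicit form of psi square} produces a continuous $\partial_{z_2z_2}\Phi=-s^2(z_1)f(z_2)(\partial_{z_2}\Phi)^3$. Because $\Phi$ is jointly $C^1$ with $\partial_{z_2}\Phi>0$, the implicit function theorem applied to $y_2-\Phi(y_1,z_2)=0$ recovers $\phi$ as a jointly $C^1$ function of $(y_1,y_2)$, which legitimizes differentiation in $y_1$. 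Differentiating $y_2=\Phi(y_1,\phi(y_1,y_2))$ in $y_1$ and using \eqref{dery2phi} gives
\[
	\pd{\phi}{y_1}=-\frac{\partial_{z_1}\Phi}{\partial_{z_2}\Phi}=-(\partial_{z_1}\Phi)\,\pd{\phi}{y_2},
\]
and one further differentiation expresses $\partial_{y_1y_2}\phi$ through $\partial_{z_1z_2}\Phi$, $\partial_{z_2z_2}\Phi$ and the $y$-derivatives already identified. Each of these is a jointly continuous function of $(z_1,z_2)$ precomposed with the continuous map $(y_1,y_2)\mapsto(y_1,\phi(y_1,y_2))$, so together with $\partial_{y_2y_2}\phi=s^2(y_1)f(\phi)$ from the equation itself we conclude that $\phi,\partial_{y_1}\phi,\partial_{y_2}\phi,\partial_{y_1y_2}\phi,\partial_{y_2y_2}\phi$ are all jointly continuous on $\bbar{\Om_0}$, i.e.\ $\phi\in Y$.

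Finally I would read off the two quantitative conclusions. The uniform lower bound \eqref{e:dy2phigeqGamma>0} follows from Corollary~\ref{corollary: uniform positive lower bound for phi x2}, which now applies since $\partial_{y_2}\phi$ has just been shown to be continuous, with $\hat\gamma=\bar\gamma$. For the far-field decay \eqref{eq4.28.5}, I would combine the displayed identity $\partial_{y_1}\phi=-(\partial_{z_1}\Phi)\,\partial_{y_2}\phi$ with the uniform upper bound on $\partial_{y_2}\phi$ furnished by membership in the set $M$ of Proposition~\ref{existence of solution} (using $\sup_\RR s^2<\infty$) and the decay $\partial_{z_1}\Phi\to0$ from \eqref{asymptPhi}, whence $|\partial_{y_1}\phi|\to0$ uniformly as $y_1\to\pm\infty$. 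The main obstacle is precisely the second step: the degeneracy of \eqref{transformed boundary value problem without extension} (no second-order $y_1$-derivative) prevents extracting horizontal regularity from the equation directly, and the Euler--Lagrange representation is exactly the device that converts the problem into one with explicit, manifestly regular dependence on the horizontal variable.
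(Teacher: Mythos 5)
Your proposal is correct and follows essentially the same route as the paper: fixed-point existence plus the comparison/monotonicity results for each fixed $y_1$, then the Euler--Lagrange inverse $\Phi$ and the identities $\pd{\phi}{y_1}=-(\pd{\phi}{y_2})\pd{\Phi}{z_1}$ and the corresponding one for $\pd{\phi}{y_1y_2}$ to transfer the explicit regularity of $\Phi$ back to $\phi$, with \eqref{e:dy2phigeqGamma>0} from Corollary~\ref{corollary: uniform positive lower bound for phi x2} and \eqref{eq4.28.5} from \eqref{asymptPhi}. The only cosmetic difference is that you invoke the implicit function theorem for the joint continuity of $\phi$ where the paper uses invariance of domain; both work.
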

\begin{proof}
	The regularity and monotoncity in the $y_2$-direction of the solution $\phi$ to \eqref{transformed boundary value problem} follow immediately from Propositions~\ref{existence of solution} and ~\ref{Theorem: Monotonicity result in x_2 by sign condition}. It follows from Proposition~\ref{Theorem: Monotonicity result in x_2 by sign condition} again that the unique solution $\phi$ takes value only in $[0,\mfc]$. Hence, the unique solution $\phi$ of \eqref{transformed boundary value problem} also satisfies \eqref{transformed boundary value problem without extension} indeed. 
	
	We note that the inverse of the transformation \eqref{definition: E-L transformation} is
	\begin{equation}\label{inverse of L}
		(y_1, y_2) = (z_1, \Phi(z_1, z_2)),
	\end{equation}
	which is a bijective continuous function from $\RR\times(0,\mfc)$ to $\Omega_0=\mathbb{R}\times (0,1)$. Hence, by the invariance of domain, we can conclude that the transformation \eqref{definition: E-L transformation} is jointly continuous with respect to both $y_1$ and $y_2$, and so is $\phi(y_1, y_2)$. Furthermore,  $\pd{\phi}{y_2}$ is also continuous since we have relation \eqref{identity: first derivative of phi and phi inverse} and Proposition~\ref{theorem: regularity of phi inverse}.
	
	Differentiating \eqref{identity: function compposite with inverse} with respect to $z_1$, and using \eqref{inverse of L}, we obtain
	\begin{equation}\label{identity: relation of x derivative of phi and phi inverse}
		\pd{\phi(y_1, y_2)}{y_1} = \pd{\phi(z_1, \Phi(z_1,z_2))}{{y_1}} 
		= -(\pd{\phi}{y_2}(z_1, \Phi(z_1, z_2)))(\pd{\Phi(z_1,z_2)}{z_1}).
	\end{equation}
	This means that $\pd{\phi}{y_1}$ exists and is continuous, provided that $\pd{\phi}{y_2}$ and $\pd{\Phi}{z_1}$ are well-defined and continuous, which has been just proved and was proved in Proposition~\ref{theorem: regularity of phi inverse}, respectively.
	
	The jointly continuity of $\pd{\phi}{y_2 y_2}$ follows immediately from the equation in 
	\eqref{transformed boundary value problem without extension}
	and the fact that $\phi$ is jointly continuous with respect to both the $y_1$ and $y_2$-coordinate.
	Furthermore, differentiating \eqref{identity: first derivative of phi and phi inverse} with respect to $z_1$, and using \eqref{dery2phi} and \eqref{inverse of L}, we have
	\begin{align*}
		\pd{\phi}{y_1 y_2}(y_1,y_2) &= \pd{\phi}{y_1 y_2}(z_1,\Phi(z_1,z_2))\\
		&= -\frac{\pd{\Phi(z_1,z_2)}{z_1z_2}}{(\pd{\Phi(z_1,z_2)}{z_2})^2} - \pd{\phi}{y_2 y_2}(z_1,\Phi(z_1,z_2))\pd{\Phi(z_1,z_2)}{z_1}.
	\end{align*}
	Hence one can conclude that $\pd{\phi}{y_1 y_2}$ exists and is continuous by Proposition~\ref{theorem: regularity of phi inverse}. 
	
	As $\pd{\phi}{y_2}$ is continuous, it follows from Corollary~\ref{corollary: uniform positive lower bound for phi x2} that \eqref{e:dy2phigeqGamma>0} holds.
	Finally, by \eqref{asymptPhi} and \eqref{identity: relation of x derivative of phi and phi inverse}, one has \eqref{eq4.28.5}.
\end{proof}

Now we are ready to prove Theorem~\ref{theorem: existence of solution without sign condition}.

\begin{proof}[Proof of Theorem~\ref{theorem: existence of solution without sign condition}]
	Proposition~\ref{theorem: existence of stream function without sign condition} guarantees the existence and uniqueness, as well as the monotonicity \eqref{e:dy2phigeqGamma>0} and asymptotic behavior \eqref{eq4.28.5}, of solutions (in $Y$) to the boundary value problem \eqref{transformed boundary value problem without extension}.  This, together with the change of variables~\eqref{change of variables}-\eqref{definition: definition of phi} and Lemma~\ref{lemma: equivalence lemma}, yields the existence and uniqueness of classical solutions to the steady hydrostatic Euler equations \eqref{equation : steady state hydrostatic Euler equation} subject to the boundary condition \eqref{boundary condition for the flow in perturbed domain}, as well as the sign condition \eqref{e:v1>0inbarOmegap}.  In order to prove Theorem \ref{theorem: existence of solution without sign condition}, it remains to verify the asymptotic behavior of the velocity field $(v_1, v_2)$. 
	
	It follows from direct computations that $\pd{\varphi}{x_1}$, $\pd{\varphi}{x_2}$, $\pd{\phi}{y_1}$ and $\pd{\phi}{y_2}$ are related by
	\begin{equation*}
		\begin{pmatrix}
			\pd{\varphi}{x_1}(x_1, x_2)\\
			\pd{\varphi}{x_2}(x_1, x_2)
		\end{pmatrix}
		=
		\begin{pmatrix}
			1 & -\frac{\left(s_0'(x_1) +  \mathfrak{s}(x_1, x_2)s'(x_1)\right)}{s(x_1)}\\
			0 & \frac{1}{s(x_1)}
		\end{pmatrix}
		\begin{pmatrix}
			\pd{\phi}{y_1}\left(x_1,  \mathfrak{s}(x_1, x_2)\right)\\
			\pd{\phi}{y_2}\left(x_1,  \mathfrak{s}(x_1, x_2)\right)
		\end{pmatrix},
	\end{equation*}
	where
	\begin{equation}
		\mathfrak{s}(x_1, x_2) :=\frac{x_2-s_0(x_1)}{s(x_1)}.
	\end{equation}
	Hence by the definition of \eqref{negative gradient per of stream function}, one has
	\begin{equation}\label{equation: explicit form of the flow in terms of phi}
		\begin{aligned}
			\qquad\begin{pmatrix}
				v_1\\
				v_2
			\end{pmatrix}
			=
			\begin{pmatrix}
				\frac{1}{s(x_1)}\pd{\phi}{y_2}\left(x_1, \mathfrak{s}(x_1,x_2)\right)\\
				-\pd{\phi}{y_1}\left(x_1, \mathfrak{s}(x_1,x_2)\right) + \frac{\left(s_0'(x_1) + \mathfrak{s}(x_1,x_2)s'(x_1)\right) \pd{\phi}{y_2}\left(x_1, \mathfrak{s}(x_1,x_2)\right)}{s(x_1)}
			\end{pmatrix}.
		\end{aligned}
	\end{equation}
	Since $\phi\in Y$, this implies that 
	\[\Psi(y_1, y_2):= \pd{\phi}{y_2}(y_1, y_2)\]
	is a function in  $C^1\left(\overline{\Om_0}\right)$.  To justify the upstream behavior~\eqref{upstreambehavior}, we notice that the uniform limit of $\phi$ satisfies \eqref{problem: ODE for the uniform limit of phi}, with $f(\varphi) = (v^-_1)'(\kappa(\varphi))$, and that $\pd{\phi}{y_2}$ converges uniformly to $(\phi^{-\infty})'$ as $y_1\to -\infty$. On the other hand, the unique solution of \eqref{problem: ODE for the uniform limit of phi} is
	\[\phi^{-\infty}(y_2)=\varphi^{-}(y_2) =\int_{0}^{y_2}v^-_1(s)\,ds.\]
	This implies that
	\[
	(\phi^{-\infty})'(y_2) = v^-_1(y_2),
	\]
	and hence, $v_1=\partial_y\phi\to v_1^-$ uniformly as $y_1\to -\infty$. Also, it follows from Assumption \eqref{structural assumption on Omega p}, Proposition~\ref{theorem: existence of stream function without sign condition}, and \eqref{equation: explicit form of the flow in terms of phi} that $v_2\to 0$ as $x_1 \to -\infty$. By a similar argument, we can justify the downstream behavior \eqref{downstreambehavior}, where $v_1^+ := \frac{(\phi^\infty)'}{\sigma}$, and $\phi^\infty$ satisfies
		\begin{equation}\label{equation: ODE of phi in downstream}
			\left\{\begin{aligned}
				(\phi^{\infty})'' = \sigma^2f(\phi^{\infty})\quad\text{in}\ (0,1)\\
				\phi^{\infty}(0) = 0,\quad\phi^{\infty}(1) = \mfc > 0
			\end{aligned}\right.
		\end{equation}
		with $f$ and $\mfc$ defined in \eqref{deff} and \eqref{mass flux constant}, respectively.
		
		This finishes the proof of Theorem  \ref{theorem: existence of solution without sign condition}.
	\end{proof}

It follows from \eqref{eq4.28.5} and \eqref{equation: explicit form of the flow in terms of phi} that the velocity has asymptotic behavior \eqref{equality: behavior of the flow at downstream for compact case}  if $\Om_p$ satisfies \eqref{generalnozzle} instead of (A3) of Assumption \ref{structural assumption on Omega p}. This is exactly what we claimed in Remark \ref{remarkdownstream}.

\medskip 

\textbf{Acknowledgement.}  The work of Wong was partially supported by the HKU Seed Fund for Basic Research under the project code 201702159009, the Start-up Allowance for Croucher Award Recipients, and Hong Kong General Research Fund (GRF) grants with project numbers 17306420, 17302521, and 17315322. The work of Xie was partially supported by NSFC grant 11971307,  Natural Science Foundation of Shanghai 21ZR1433300, and Program of Shanghai Academic Research Leader 22XD1421400.

\end{document}